\numberwithin{equation}{section}
\title{Upper Bounds on the Cardinality of Higher Sumsets}
\author{Giorgis Petridis}
\date{}
\theoremstyle{plain}
\newtheorem{theorem}{Theorem}[section]
\newtheorem{lemma}[theorem]{Lemma}
\newtheorem{proposition}[theorem]{Proposition}
\newtheorem{corollary}[theorem]{Corollary}
\theoremstyle{definition}
\newtheorem{indexedexample}[theorem]{Example}
\theoremstyle{definition}
\newtheorem*{remark}{Remark}
\newcommand{\rst}[1]{\ensuremath{{\mathbin\upharpoonright}%
\raise-.5ex\hbox{$#1$}}} 
\DeclareMathOperator{\im}{Im} 
\begin{document}

\pagenumbering{arabic}

\setcounter{section}{0}


\bibliographystyle{acm}

\maketitle


\begin{abstract}
Let $A$ and $B$ be finite sets in a commutative group. We bound $|A+hB|$ in terms of $|A|$, $|A+B|$ and $h$. We provide a submultiplicative upper bound that improves on the existing bound of Imre Ruzsa by inserting a factor that decreases with $h$.
 \end{abstract}

\section[Introduction]{Introduction}
\label{Introduction}

One of the core problems in additive number theory is to obtain estimates on the cardinality of sumsets. Given sets $A$ and $B$ in a commutative group the sumset of $A$ and $B$ is defined by $$A+B=\{a+b:a\in A, b\in B\}.$$ 

In this paper we are concerned with obtaining upper bounds on the cardinality of sumsets of the from $A+hB$ recursively defined by $A+hB=(A+(h-1)B)+B$. It is easy to check that under no further restriction the extremal examples are when $A$ and $B$ are disjoint sets consisting of generators of a free commutative group. 

Usually in additive number theory the sets $A$ and $B$ are not generic. Very often a bound on $|A+B|$ is known: $|A+B|\leq \alpha |A|$ for some $\alpha\in \mathbb{R}^+$ that could depend on $A, B$. Given the trivial lower bound $|A+B|\geq |A|$ this extra condition measures how much adding $B$ to $A$ changes the cardinality. The question we address is how much adding $B$ repeatedly to $A$ changes the cardinality: we suppose that $A$ and $B$ are finite sets in a commutative group and that both $|A|$ and $|A+B|$ are given and ask for an upper bound on $|A+hB|$ in terms of $|A|$ and $|A+B|$.  

The special case when $A=B$ has attracted most attention in the literature and the answer to our question is well understood. Helmut Pl\"unnecke established in \cite{Plunnecke1970} that $|A+A|\leq \alpha |A|$ implies 
\begin{eqnarray}\label{hA}
|hA|\leq \alpha^h |A|.
\end{eqnarray}
The upper bound is sharp when $A$ is a group and $\alpha=1$. More importantly it has the correct dependence on $\alpha$ and $|A|$: for infinitely many $\alpha\in \mathbb{Q}^+$ there are examples (natural generalisations of Theorem 9.5, Chapter 1 of \cite{Ruzsa2009}) where $|hA|=c(h) \alpha^h |A|$. In these example $c(h)$ is of the order $h^{-h}$ and so there is reason to believe that the dependence on $h$ in Pl\"unnecke's upper bound can be improved when $\alpha$ is large. 

A particular feature of \eqref{hA} is the multiplicativity of the upper bound. By this we mean that replacing $A$ by its $r$-fold tensor product gives the same inequality. This is because $|A|$ is replaced by $|A|^r$, $\alpha$ by $\alpha^r$ and $|hA|$ by $|hA|^r$. 

On the other hand we can get the correct dependence on $h$, and in particular a submultiplicative upper bound, by not insisting on having the best possible power dependence on $\alpha$. Imre Ruzsa has shown in \cite{Ruzsa1999} that $|A+A|\leq \alpha |A|$ implies $$|hA|\leq \alpha^2 \binom{\alpha^4+h-2}{h-1} |A| .$$

The outlook changes when a different set $B$ is added repeatedly to $A$. Ruzsa has studied the problem of bounding $|A+2B|$ in terms of $|A|$ and $|A+B|$ thoroughly. He has shown (Section 6 in \cite{Ruzsa2006}  and Theorem 9.1 of Chapter 1 in \cite{Ruzsa2009}) that $$|A+2B|\leq \alpha^2 |A|^{3/2}.$$  The most significant difference with the $A=B$ case is that the exponent of $|A|$ is no longer one. One may initially suspect that the upper bound must therefore not be sharp, but Ruzsa has shown otherwise. In \cite{Ruzsa1996} he gave examples (for every positive rational $\alpha$ and infinitely many $|A|$) where $$|A+2B|\geq \left(\frac{\alpha-1}{4}\right)^2 |A|. $$ 

Ruzsa's method works equally well for $h\geq 2$ and yields the multiplicative upper bound $$|A+hB|\leq \alpha^h |A|^{2-1/h}.$$ The upper bound can also be derived from a (more general and more recent) result of Balister and Bollob\'as (Theorem 5.1 in \cite{Balister-Bollobas2007}; for a different proof see Corollary 3.7 in \cite{MMT2008}). 

Ruzsa's upper bound is in the correct order of magnitude in $\alpha$ and $|A|$. We demonstrate this by extending an example of his \cite{Ruzsa2006} to larger $h$.
\begin{indexedexample} \label{Universal Lower bound A+hB}
Let $h$ be a positive integer. There exist infinitely many $\alpha\in \mathbb{Q}^+$ with the following property. For each such $\alpha$ there exist infinitely many $m$ such that one can find finite sets $A$ and $B$ in a commutative group with $|A|=m$, $|A+B|\leq \alpha m$ and $$ |A+hB| \geq (1+o(1))\, \frac{\alpha^h}{h(h+1)^h} m^{2-1/h}.$$
\end{indexedexample}
The $o(1)$ term is $o_{m\rightarrow\infty}(1)$.

Ruzsa also noted that the behaviour of $|A+2B|$ (and in fact of $|A+hB|$) changes when $\alpha$ is close to one. He proved (Theorem 10.1 in Chapter 1 of \cite{Ruzsa2009}) $$|A+2B|\leq \alpha m + \frac{3}{2} \alpha (\alpha-1)|A|^{3/2}$$ for $\alpha\leq 2$. His method works equally well for $h\geq 2$ and gives $$|A+hB|\leq \alpha m + \frac{h+1}{h} \alpha^{h-1}
(\alpha-1)|A|^{2-1/h}.$$ It is not clear whether the stated upper bound has the correct dependence on $\alpha$. Extending an example of Ruzsa \cite{Ruzsa2009} to larger $h$ nonetheless shows that the dependence on $(\alpha-1)$ and $|A|$ is correct. 
\begin{indexedexample} \label{Special Lower bound A+hB}
Let $h$ be a positive integer and $\alpha$ a real in the interval $[1,2]$. For infinitely many $m$ there exist finite sets $A$ and $B$ in a commutative group such that $|A|=m$, $|A+B| \leq (1+o(1))\,\alpha m$ and $$|A+hB| \geq
(1+o(1))\,\left(m+\frac{(\alpha-1)}{h} m^{2-1/h}\right).$$
\end{indexedexample}
The $o(1)$ term is $o_{m\rightarrow\infty}(1)$. The same notation will be used throughout this paper.

The main goal of this paper is to improve Ruzsa's upper bounds by introducing a further term that decreases with $h$. This is a first step towards determining the correct dependence of $|A+hB|$ on $h$. We prove the following result.
\begin{theorem} \label{A+hB}
Let $h$ be a positive integer, $\alpha$ a positive real number and $m$ an arbitrarily large integer. Suppose that $A$, $B$ are finite non-empty sets in a commutative group that satisfy $|A|=m$ and
$|A+B| \leq \alpha m$. Then
\begin{eqnarray*}
|A+hB| \leq (1+o(1))\, \frac{e}{2 h^2} \, \alpha^h \hspace{1 pt}
m^{2-1/h}.
\end{eqnarray*}
We also have
\begin{eqnarray*}
|A+hB| \leq m + (1+o(1))\,\frac{e}{h}(\alpha-1)\alpha^{h-1}
m^{2-1/h},
\end{eqnarray*}
which is stronger for $\alpha\leq 1+1/(2h-1)$.

The $o(1)$ term tends to zero as $m$ gets arbitrarily large and is of the order $O(m^{-1/h})$.
\end{theorem}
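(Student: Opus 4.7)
The plan is to sharpen Ruzsa's bound $\alpha^{h}m^{2-1/h}$ by a factor decreasing in $h$, combining the Plünnecke--Petridis inequality with a careful covering and optimisation. The two main ingredients will be: (i) the Plünnecke--Petridis inequality, which produces a non-empty $X\subseteq A$ realising the minimum Plünnecke ratio $K:=|X+B|/|X|\leq\alpha$ and satisfies $|X+jB+C|\leq K^{j}|X+C|$ for every finite set $C$ and every positive integer $j$, so in particular $|X+hB|\leq K^{h}|X|$; and (ii) Ruzsa's covering lemma (and variants of it), to transport the bound on $|X+hB|$ to one on $|A+hB|$ by covering $A+kB$ with translates of $X+kB$ at appropriate levels $k$.

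\textbf{Main calculation.} With $s=|X|$, a single application of the covering step gives $|A+hB|\lesssim (m/s)\cdot K^{h}s$ up to lower-order corrections; balancing at $s\sim m^{1/h}$ already reproduces Ruzsa's exponent $2-1/h$. The improvement factor $e/(2h^{2})$ is the main novelty: instead of covering at a single level, I would iterate the Plünnecke--Petridis / covering cycle across all intermediate levels $k=0,1,\ldots,h-1$ and optimise the allocation of covering effort, choosing the size of the "pivot'' $|X|$ carefully at each level to trade off the Plünnecke cost $K^{h-k}$ against the covering cost $|A+kB|/|X|$. The resulting multivariate optimisation should produce a sum resembling $\sum_{k\geq 0}1/k!\to e$ together with a quadratic / convex term giving the $1/h^{2}$, explaining the shape of the constant. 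For the second bound, valid when $\alpha$ is close to $1$, I would apply the same machinery to the telescoping identity $|A+hB|=m+\sum_{k=0}^{h-1}\bigl(|A+(k+1)B|-|A+kB|\bigr)$; the factor $(\alpha-1)$ should arise because only the excess $|A+B|-|A|\leq(\alpha-1)m$ propagates through the induction at each level.

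\textbf{Main obstacle.} The genuine difficulty is the sharp leading constant: standard Plünnecke / covering manipulations give a factor of the form $C^{h}$ that does not decrease in $h$, so a genuine variational or convex-optimisation argument is required, and this is likely the most delicate ingredient. Controlling the $o(1)$ error at the stated rate $O(m^{-1/h})$ will additionally require keeping track of integer rounding losses in the covering counts and of the difference between the magnification ratio $K$ and the nominal $\alpha$; I expect this to be technically tedious but conceptually routine once the main optimisation is set up.
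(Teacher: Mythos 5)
There is a genuine gap, and in fact the central mechanism you sketch is not the one that makes the theorem work. Your plan is essentially Ruzsa's original scheme (Plünnecke--Petridis pivot $X$ plus a covering step), and as you yourself note, that scheme by itself only gives a constant of the form $C^{h}$, with no decay in $h$. The two ingredients you would need but do not supply are: (a) a \emph{partition} of $A$ into pieces $Z_{1},\dots,Z_{k}$ of strictly increasing magnification ratios $\alpha_{1}<\alpha_{2}<\cdots$, obtained by repeatedly peeling off the minimising subset from the channel of the remainder (Lemma~\ref{Graph partition}), which lets one run Theorem~\ref{Stronger Plunnecke} on the slow pieces with \emph{their own} ratios $\alpha_{i}$ rather than with $\alpha$; and (b) a binomial counting bound for the fast pieces, namely that in a restricted addition graph $G_{R}(A,B,C)$ one has $|\im^{(h)}(a)|\leq\binom{|\im(a)|+h-1}{h}$ (Lemma~\ref{Plunnecke for restricted addition graphs}), which after summing gives $|V_{h}|\leq |V_{1}|\,|hB|/\beta$ where $\beta$ is the pseudo-cardinality defined by $\binom{\beta+h-1}{h}=|hB|$ (Proposition~\ref{Restricted addition graph growth}). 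Ingredient (a) buys one factor of $1/h$ (through the optimisation over $\alpha_{1}$, producing $\alpha-\alpha_{1}=\alpha/h$ at the optimum), and ingredient (b) buys the other factor of $1/h$ together with the $e$.

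Your proposed explanation for the constant $e$, ``a sum resembling $\sum_{k\geq 0}1/k!\to e$,'' is not what happens. In the paper the $e$ arises from the elementary Stirling-type estimate $\binom{\beta+h-1}{h}\leq\bigl(e(\beta+h)/h\bigr)^{h}$, applied once to convert $|hB|/\beta$ into $\tfrac{e}{h}|hB|^{1-1/h}(1+O(\beta^{-1}))$; there is no infinite series and no telescoping over levels $k=0,\dots,h-1$ in the main bound. Without Lemma~\ref{Plunnecke for restricted addition graphs} there is no source for $e/h$ at all, and without the partition in Lemma~\ref{Graph partition} there is nothing to optimise, since the pivot $X$ from the Plünnecke--Petridis inequality comes with an unspecified ratio $K\leq\alpha$ and a single covering step destroys this extra information. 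So the proposal as written would reproduce Ruzsa's $\alpha^{h}m^{2-1/h}$ but not the improvement by $e/(2h^{2})$. Your idea for the second bound (telescoping $|A+hB|-|A|$ and propagating only the excess $(\alpha-1)m$) is closer in spirit to what the paper actually does for $\alpha\leq 2$: the paper runs an induction $|A+hB|\leq|A+(h-1)B|+|(A+hB)\setminus(b+A+(h-1)B)|$ and bounds the second term by applying Proposition~\ref{Restricted addition graph growth} to $G_{R}(A,B,b+A)$, whose first layer has size $(\alpha-1)m$ — but again this step is impossible without the binomial counting ingredient (b).
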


The biggest qualitative improvement comes from having a term that decreases with $h$ while keeping the optimal dependence on $\alpha$ and $m$. The bound is furthermore submultiplicative, a sharp contrast to many results in the area when different sets are added to one another \cite{Bukh2008,GMR2008,GMR2010}. It should be noted that while it is easy to deduce a multiplicative upper bound from a supermultipicative upper bound, it is not easy to turn a multiplicative bound to submultiplicative. The former task can be done by applying the tensor product trick, which has been applied by Ruzsa and others in many occasions. We will not discuss it any further. A good summary of how powerful it is can be found in \cite{TaoBlogTensor}.

As we will see roughly speaking one a factor of $h$ is saved by strengthening Pl\"unnecke's graph-theoretic method and another factor of $h$ by replacing it by a more efficient elementary counting argument. 

The distinction between the values of $\alpha$ is essential as in the latter case the difference between $\alpha$ and $\alpha -1$ can be substantial. For example $m$ is the dominant term in the second upper bound for $\alpha\leq 1+h e^{-h} m^{-1+1/h}$. 

Another observation is that setting $B=A$ in Theorem~\ref{A+hB} works better than applying \eqref{hA} when $\alpha\geq m^{1-1/h}$ (for example when $A$ consists of generators of a commutative group).

The paper is organised as follows. In Section~\ref{Plunneckes Inequality} we strengthen Pl\"unnecke's graph theoretic method, a task which has interest in its own right. Sections \ref{Ruzsa Upper Bound} to \ref{Upper Bounds} are devoted to motivating and presenting a proof of Theorem~\ref{A+hB}. In section  \ref{Lower Bounds}  Examples \ref{Universal Lower bound A+hB} and \ref{Special Lower bound A+hB} are constructed. In Section~\ref{Further} we state some graph-theoretic results about that follow by a similar approach, but do not provide proofs. Finally in Section~\ref{SetAddition} we discuss how the material in this paper relates with more recent advances in the subject.

\subsection*{Acknowledgements}
The research leading to the results of the paper was done while the author was at the University of Cambridge. The author would like to thank Imre Ruzsa for detailed comments that improved the quality of the paper. In particular the proofs of Propositions~\ref{Restricted addition graph growth} and \ref{A+hB - all alpha} were simplified considerably. Suggestions of Tim Gowers, Ben Green and Peter Keevash have also helped make the paper better. 

\section[Pl\"unnecke's Inequality]{Pl\"unnecke's Inequality}
\label{Plunneckes Inequality}

We begin by recalling Pl\"unnecke's graph theoretic method and
explaining the refinement necessary to obtain Theorem~\ref{A+hB}.
Much of the material in this section can be found in any of the
standard references \cite{Nathanson1996,Ruzsa2009,Tao-Vu2006}. The
notation used is however slightly different.

$G$ will always be a directed layered graph with edge set $E(G)$
and vertex set $V(G)=V_0\cup\dots\cup V_h$, where the $V_i$ are
the \textit{layers} of the graph. For any $S\subseteq V_i$ we
write $S^c= V_i \setminus S$ for the complement of $S$ in $V_i$
and not in $V(G)$. We furthermore assume that directed edges exist
only between $V_i$ and $V_{i+1}$.

We are interested in a special class of such graphs which satisfy a graph-theoretic version of commutativity, the so-called Pl\"unnecke's conditions.  \textit{Pl\"{u}nnecke's upward condition} states that if $uv$ and $vw_i\in E(G)$ for $1\leq i\leq k$, then there exists a vertex $v_i$ for all $1\leq i \leq k$ such that both $uv_i$ and $v_iw_i\in E(G)$. \textit{Pl\"{u}nnecke's downward condition} states that if $vw$ and $u_iv\in E(G)$ for $1\leq i\leq k$, then there exists a vertex $v_i$ for all $1\leq i \leq k$ such that both $u_iv_i$ and $v_iw\in E(G)$. $G$ is called a \textit{commutative graph} when it satisfies both conditions.

The most typical example is $G_+(A,B)$, the \textit{addition
graph} of two sets $A$ and $B$ in an ambient commutative group.
This is defined as the directed graph whose layers are $V_0=A$
and, for all $i>1$, $V_i$ is $A+iB$. A directed edge exists
between $x\in V_{i-1}$ and $y\in V_i$ if and only if $y-x\in B$.

A \textit{path} of length $l$ in $G$ is a sequence of vertices
$v_1,\ldots, v_l$ so that $v_{i}v_{i+1}\in E(G)$ for all $1\leq i
\leq l-1$. For any subgraph $H$ of $G$ we define $
\im^{(i)}_{H}(Z)$ to be the collection of vertices that can be
reached from $Z$ via paths of length $i$ in $H$. When the subscript is omitted
we are taking $H$ to be $G$ and when the superscript is omitted we
are taking the neighbourhood of $Z$ in $H$.

For $i>j$ and $U\subseteq V_i$ $V\subseteq V_j$ the graph consisting of all paths in $G$ starting
at $U$ and ending in $V$ is called a channel. A crucial observation we will use repeatedly is that any channel of a commutative graph is a commutative graph in its own
right. For $ Z\subseteq V_0$ the \textit{channel of} $Z$ is the graph consisting of all paths in $G$ starting
at $Z$ and ending in $V_h$. It should be noted that in this case $\im_H(v) = \im_G(v)$ holds for all $v\in V(H)$.

Ruzsa introduced \textit{restricted addition graphs}, which are
addition graphs with a component removed. Given any three sets
$A$, $B$ and $C$ we take $G_{\!R}(A,B,C)$ to be the graph with
layers $V_0=A$ and $V_i = (A+iB) \setminus (C+(i-1)B)$ for all
$i>0$. The edges between layers are determined similarly to
addition graphs: $xy\in E(V_i,V_{i+1})$ if and only if $y-x\in
B$. $G_{\!R}(A,B,C)$ therefore consists of all the paths in
$G_+(A,B)$ that end in $(A+hB)\setminus (C+(h-1)B)$ and is therefore (a channel and in particular) a commutative graph.

For $i=1,\dots, h$ the $i$th magnification ratio of $G$ is defined as
$$ D_i(G) = \min_{\emptyset\neq Z\subseteq V_0} \frac{|\im^{(i)}(Z)|}{|Z|} .$$
Pl\"unnecke established in \cite{Plunnecke1970} the following.
\begin{theorem}[Pl\"{u}nnecke]\label{Plunnecke}
Let $G$ be a commutative graph. Then the sequence $D_i^{1/i}(G)$
is decreasing.
\end{theorem}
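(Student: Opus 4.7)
I would prove the equivalent statement $D_{i+1}(G)^i \le D_i(G)^{i+1}$ for each $1 \le i \le h-1$. The strategy combines (i) tensor-power amplification to reduce to the case of integer magnification ratios, and (ii) a Menger/flow argument across two adjacent layers, using the commutativity conditions to lift a cut through intermediate layers.

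\emph{Amplification.} Form the tensor power $G^{\otimes n}$, whose vertices in layer $j$ are tuples $V_j^n$ with coordinate-wise edges. Both Pl\"unnecke conditions are inherited, so $G^{\otimes n}$ is commutative, and a short check yields $D_j(G^{\otimes n}) = D_j(G)^n$ (the $\le$ direction from product sets; the $\ge$ direction by projecting a minimizing $Z \subseteq V_0^n$ onto each coordinate, using commutativity to ensure the projection does not increase the ratio). Since the target inequality is homogeneous in the $D_j$'s, taking $n$ large and approximating by rationals reduces the problem to the case where $D_i$ and $D_{i+1}$ are both positive integers.

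\emph{Layered flow.} In the integer case I contract each path of length $i+1$ in $G$ to a single edge of an auxiliary bipartite graph $\Gamma$ on $V_0 \cup V_{i+1}$. The bound $|\im^{(i+1)}(Z)| \ge D_{i+1}|Z|$ for every $Z \subseteq V_0$ is exactly Hall's condition at strength $D_{i+1}$, and via a Kőnig/blow-up argument it produces $D_{i+1}$ edge-disjoint systems in $\Gamma$, each covering every vertex of $V_0$. I then lift each edge of these systems to a genuine path $u \to v_1 \to \cdots \to v_i \to w$ in $G$. The downward condition lets me reroute paths sharing a $V_{i+1}$-endpoint through a common $V_i$-vertex, and the upward condition does the symmetric for $V_0$-endpoints; iterating these reroutings across the $i$ intermediate layers bounds the total number of distinct $V_i$-vertices used by all the lifts. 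Comparing this number against both $D_{i+1}|V_0|$ (the number of paths) and $|\im^{(i)}(V_0)| \ge D_i|V_0|$, and averaging over the choice of intermediate layer, gives the inequality.

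\textbf{Main obstacle.} The delicate part is the final counting: the exponents $i$ and $i+1$ in the target must come out exactly, and this forces each commutativity invocation to be paid for by a single intermediate layer. Getting a clean accounting probably requires iterating the argument rather than applying it once, and it may be cleaner to bundle the tensor amplification together with the flow argument so that the lifted paths can be chosen with integer multiplicities from the outset. If the direct flow argument proves awkward, an alternative is to run induction on $h$ and appeal inside a channel of a near-minimizer of $D_i$, where the last-layer magnification can be analyzed via the downward condition alone.
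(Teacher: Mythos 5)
The paper does not actually prove Theorem~\ref{Plunnecke}; it states it as a known result of Pl\"unnecke \cite{Plunnecke1970} and refers to \cite{Ruzsa1989,GPPlIn} for proofs. So there is no in-paper argument to compare against, and I will evaluate your sketch on its own merits.

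Your general direction --- tensor-power amplification plus a Menger/Hall flow argument, with the commutativity conditions used to manipulate paths or cuts --- is indeed the template of the Pl\"unnecke--Ruzsa proof, but the sketch has three gaps that are not minor. First, the claim that $D_j(G^{\otimes n}) = D_j(G)^n$ is ``a short check by projection'' is not right: for $Z\subseteq V_0^n$ the coordinate projections only give $Z\subseteq\prod_k\pi_k(Z)$ and $\im^{(j)}(Z)\subseteq\prod_k\im^{(j)}(\pi_k(Z))$, which are \emph{upper} bounds on both quantities and say nothing useful about the ratio $|\im^{(j)}(Z)|/|Z|$. The inequality $D_j(G\otimes H)\geq D_j(G)D_j(H)$ is the genuinely hard half of multiplicativity and is itself proved via the Menger cut characterization; it is of comparable depth to the theorem. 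Second, tensoring sends $D_j\mapsto D_j^n$, which keeps the ratios rational but does not make them integers, so this step does not reduce to the integer case; the standard fix is to also take products with independent and inverse-independent addition graphs whose magnification ratios can be prescribed, and that extra ingredient is missing. Third, the closing count --- ``rerouting through a common $V_i$-vertex \dots\ and averaging over the choice of intermediate layer'' --- does not close the inequality: you want to bound $D_{i+1}$ from above, but you are pairing an upper bound on the number of intermediate $V_i$-vertices used with the lower bound $|\im^{(i)}(V_0)|\geq D_i|V_0|$, and these two estimates point in the same direction rather than trapping the quantity. The standard route is to show, using both Pl\"unnecke conditions, that any minimum vertex cut can be pushed entirely into $V_0\cup V_h$, giving $\mu(G)=\min_{Z\subseteq V_0}\bigl(|V_0\setminus Z|+|\im^{(h)}(Z)|\bigr)$, and then to combine this cut characterization with multiplicativity of $\mu$ under products and a carefully chosen auxiliary graph. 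I would rebuild the argument around the cut characterization rather than around rerouting an explicit family of disjoint paths.
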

Other proofs can be found in \cite{Ruzsa1989,GPPlIn}. The standard
application of the inequality highlights how powerful it is.
\begin{corollary}\label{hB} 
Let $A$ and $B$ be finite sets in a commutative group and $h$ a positive integer. Suppose that  that $|A|=m$ and $|A+B|\leq\alpha m$. Then $$|hB|\leq D_1(G_+(A,B))^h m \leq \alpha^h\,m.$$
\end{corollary}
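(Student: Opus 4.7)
The plan is to apply Plünnecke's inequality (Theorem~\ref{Plunnecke}) to the addition graph $G = G_+(A,B)$, which is a commutative graph. The whole argument boils down to (i) bounding $D_1(G)$ by $\alpha$ directly from the definition, (ii) invoking the theorem to pass from $D_1$ to $D_h$, and (iii) turning a statement about the image of some optimal subset $Z \subseteq A$ into a statement about $hB$ itself.

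First, taking $Z = A$ in the definition of $D_1$, we get
\begin{equation*}
D_1(G) \le \frac{|\im^{(1)}(A)|}{|A|} = \frac{|A+B|}{|A|} \le \alpha,
\end{equation*}
since edges in $G$ from $V_0 = A$ to $V_1 = A+B$ send $x$ to $x+b$, so $\im^{(1)}(A) = A+B$. By Theorem~\ref{Plunnecke}, the sequence $D_i(G)^{1/i}$ is decreasing, so in particular $D_h(G) \le D_1(G)^h \le \alpha^h$.

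Next, by the definition of $D_h$ there exists a nonempty $Z \subseteq A$ with $|\im^{(h)}(Z)| \le D_h(G)\,|Z|$. A path of length $h$ starting at $z \in Z$ ends at some $z + b_1 + \cdots + b_h$ with $b_i \in B$, and every such sum lies in $V_h = A + hB$; hence $\im^{(h)}(Z) = Z + hB$. The key observation to finish is that $|hB| \le |Z + hB|$: fixing any $z_0 \in Z$, the translate $z_0 + hB$ sits inside $Z + hB$ and has cardinality $|hB|$. Combining,
\begin{equation*}
|hB| \le |Z + hB| = |\im^{(h)}(Z)| \le D_h(G)\,|Z| \le D_1(G)^h\,|Z| \le D_1(G)^h\, m \le \alpha^h m.
\end{equation*}

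I do not expect any real obstacle here; Plünnecke's theorem does the heavy lifting. The only step that requires a moment of thought is the final inequality chain, where one must remember that the $Z$ furnished by the definition of $D_h$ need not equal $A$, and use the translation trick $|hB| \le |Z + hB|$ to leverage a bound on the image of a possibly small $Z$ into a bound on $|hB|$ controlled by $|A| = m$.
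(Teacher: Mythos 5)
Your proposal is correct and follows essentially the same route as the paper: apply Theorem~\ref{Plunnecke} to get $D_h(G) \leq D_1(G)^h \leq \alpha^h$, extract a minimizing set $Z \subseteq A$, identify $\im^{(h)}(Z)$ with $Z+hB$, and finish with the translation observation $|hB| \leq |Z+hB|$. You simply spell out a few steps that the paper leaves implicit (the bound $D_1(G) \leq \alpha$ via $Z = A$, and the explicit identification $\im^{(h)}(Z) = Z + hB$), but the argument is the same.
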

\begin{proof}
We work in the addition graph $G=G_+(A,B)$. We know from Theorem
\ref{Plunnecke} that
$$D_h(G)\leq D_1(G)^h\leq \alpha^h$$ and so there is a non-empty $X\subseteq V_0=A$ such
that $$|X+hB| = |\im^{(h)}(X)| \leq D_1(G)^h |X|\leq
\alpha^h\,m.$$ The claim follows as $|hB|\leq |X+hB|$.
\end{proof}
It should be noted that no information is given on the subset of
$V_0$ which gives rise to $D_i(G)$. The first step towards the proof of Theorem
\ref{A+hB} is to strengthen the inequality and prove that any
$Z\subseteq V_0$ which satisfies the property $|\im^{(j)}(Z)|=D_j(G) |Z|$ exhibits
restricted growth.
\begin{theorem}\label{Stronger Plunnecke}
Let $G$ be a commutative graph with vertex set $V_0\cup\dots\cup V_h$. Suppose that $D_j(G)=|V_j| / |V_0|$. Then $$ |V_j|^{h} \geq  |V_0|^{h-j} |V_h|^j.$$ In particular $D_1(G)=|V_1|/|V_0|$ implies
\begin{eqnarray*}
|V_h|\leq \left \lfloor \frac{|V_1|^h}{|V_0|^{h-1}}\right \rfloor = \left \lfloor D_1(G)^h |V_0| \right \rfloor.
\end{eqnarray*}
\end{theorem}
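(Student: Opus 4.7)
The plan is to reformulate the statement as the ``diagonal'' bound
\begin{eqnarray*}
|V_h| \;\leq\; \left(\frac{|V_j|}{|V_0|}\right)^{h/j}|V_0|,
\end{eqnarray*}
which on raising to the $j$-th power yields $|V_j|^h \geq |V_0|^{h-j}|V_h|^j$, and whose integer-valued consequence $|V_h|\leq \lfloor |V_1|^h/|V_0|^{h-1}\rfloor$ follows because $|V_h|$ is an integer.

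I would first settle the case $j=1$ by a Petridis-style induction, in the spirit of the author's own proof of Pl\"unnecke's inequality \cite{GPPlIn}. Assuming $D_1(G)=|V_1|/|V_0|$, the crux is the single-step inequality
\begin{eqnarray*}
|V_{k+1}|\cdot|V_0| \;\leq\; |V_1|\cdot|V_k|\qquad(k=0,1,\dots,h-1),
\end{eqnarray*}
which multiplies telescopically to $|V_h|\cdot|V_0|^{h-1}\leq|V_1|^h$. To establish it I would work inside the channel of $V_0$ and exploit that $V_0$ is itself a minimiser: every nonempty $Z\subsetneq V_0$ must satisfy $|\im(Z)|/|Z|\geq|V_1|/|V_0|$. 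Pulling back each vertex of $V_{k+1}$ to $V_k$ and then to $V_0$ via the downward Pl\"unnecke condition, and comparing two ways of counting the resulting bipartite structure, should yield the one-step bound.

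For a general $j$, I would reduce to the $j=1$ case by contracting $j$ consecutive layers of $G$ into one: define $G^{(j)}$ to have layers $V_0,V_j,V_{2j},\dots$, with the edges from $V_{lj}$ to $V_{(l+1)j}$ being the length-$j$ paths of $G$. Iterated use of the upward and downward Pl\"unnecke conditions of $G$ shows $G^{(j)}$ is itself commutative, and the hypothesis $D_j(G)=|V_j|/|V_0|$ becomes $D_1(G^{(j)})=|V_j|/|V_0|$, so the $j=1$ case applies and delivers the bound whenever $h$ is a multiple of $j$. For the intermediate values of $h$ I would interpolate using the monotonicity of $D_i(G)^{1/i}$ supplied by Pl\"unnecke's theorem (Theorem~\ref{Plunnecke}).

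The main obstacle is the graph-theoretic one-step inequality: Petridis's set-addition argument must be recast purely in terms of channels and the two commutativity conditions, which is where the genuinely new graph-theoretic work lies. Once this is in place, telescoping handles $j=1$, the contraction $G\rightsquigarrow G^{(j)}$ handles $h$ a multiple of $j$, and Theorem~\ref{Plunnecke} itself handles the intermediate exponents, completing the proof.
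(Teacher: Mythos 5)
Your route is genuinely different from the paper's, which argues by contradiction on a counterexample with minimal $|V_0|$: it invokes Pl\"unnecke's inequality to choose a maximal $S\subsetneq V_0$ with $|\im^{(h)}(S)|\leq D_j^{h/j}(G)|S|$, passes to the channel $H$ from $S^c$ to $\im^{(h)}(S)^c$, uses the hypothesis $D_j(G)=|V_j|/|V_0|$ to show $D_j(H)\leq D_j(G)$, then locates a minimal $T$ whose channel is a strictly smaller counterexample. That argument treats all $j$ and $h$ in one stroke. Your plan hinges instead on the one-step bound $|V_{k+1}|\cdot|V_0|\leq|V_1|\cdot|V_k|$, and this is precisely where the proposal has a real gap. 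You acknowledge it yourself (``genuinely new graph-theoretic work''), but the acknowledgement does not close it. For addition graphs the one-step bound is exactly \eqref{NAP lemma} with $S=kB$, and the known proofs of \eqref{NAP lemma} lean on the ambient group (intersecting translates of $X$) in a way that has no analogue for an abstract layered graph satisfying the two Pl\"unnecke conditions. Section~\ref{SetAddition} of the paper makes exactly this point: Theorem~\ref{Stronger Plunnecke} is presented as \emph{more general} than \eqref{NAP lemma} precisely because \eqref{NAP lemma} and Reiher's extension of it are only established for (restricted) addition graphs. Note moreover that your one-step inequality is strictly stronger than the $j=1$ case of the theorem — the theorem is its telescoped consequence, not an equivalent — so it cannot be recovered from the statement being proved, and the ``pull back via the downward condition and double count'' sketch does not come close to supplying a proof.

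There are two further soft spots. First, the interpolation step for $h$ not a multiple of $j$: Pl\"unnecke's theorem controls $D_i(G)$, a minimum over nonempty $Z\subseteq V_0$, and says nothing about the ratio $|V_i|/|V_0|$ for the full bottom layer, so monotonicity of $D_i(G)^{1/i}$ alone does not convert a statement about multiples of $j$ into one about general $h$; you would have to explain how to pass from ``some $Z$ has small $h$-image'' to ``$V_0$ does.'' Second, while the contraction $G\rightsquigarrow G^{(j)}$ to a commutative graph on layers $V_0,V_j,V_{2j},\dots$ is plausible, the verification that both Pl\"unnecke conditions survive the contraction is not immediate and is left unargued. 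The paper's minimal-counterexample argument avoids all of these issues, at the cost of producing only the telescoped inequality rather than the stronger one-step bound you are aiming for.
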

\begin{proof}
Suppose not. Let $G$ be a counterexample where $|V_0|$ is minimum.
Pl\"{u}nnecke's inequality implies that the collection $$\{Z
\subseteq V_0 : |\im^{(h)}(Z)| \leq D_j^{h/j}(G) \, |Z| \}$$ is
nonempty. 

Let $S\subsetneq V_0$ be a set of maximal cardinality in the
collection and $H$ be the channel consisting of paths that start in $S^c$ and end in
$\im^{(h)}(S)^c$. Suppose that $U_0\cup U_1\cup\dots\cup U_h$ are the layers of $H$. 

$U_j$ does not intersect $\im^{(j)}(S)$ as there would
then exist a path in $H$ leading to $\im^{(h)}(S)$. We therefore
have $|U_0| = |V_0| - |S|$ and $|U_j|\leq|V_j|-|\im^{(j)}(S)| \leq
|V_j| - D_j(G) |S| = |V_j| (1-|S|/|V_0|) = |V_j|
(|V_0|-|S|)/|V_0|$. Consequently
\begin{eqnarray}\label{D_j of S complement in Stronger Plunnecke}
 D_j(H)\leq |U_j|/|U_0|\leq |V_j|/|V_0|=D_j(G).
\end{eqnarray}
Let $T\subseteq U_0$ be minimal subject to $|\im_H^{(j)}(T)| =
D_j(H)\, |T|$. Let us get a lower bound on $|\im^{(h)}_H(T)|$. We
know from the maximality of $|S|$ that
\begin{eqnarray*}
D_j^{h/j}(G) \,|S\cup T| & < & |\im^{(h)}(S\cup T)| \\
                   & =  & |\im^{(h)}(S)| + |\im^{(h)}(T) \setminus \im^{(h)}(S)|  \\
                   & =  &  |\im^{(h)}(S)|+|\im^{(h)}_H(T)|  \\
                   & \leq  & D_j^{h/j}(G)\, |S| + |\im^{(h)}_H(T)|.
\end{eqnarray*}
This implies
\begin{eqnarray}\label{D_h of T in Stronger Plunnecke}
 |\im^{(h)}_H(T)|>D_j^{h/j}(G)\, |T|.
\end{eqnarray}
Finally we consider $H^\prime$, the channel of $T$ in $H.$ This is a commutative graph with layers $T_0 \cup\dots\cup
T_h$ and by the defining properties of $T$  and \eqref{D_j of S
complement in Stronger Plunnecke}
\begin{eqnarray}\label{D_j of T in Stronger Plunnecke}
|T_j|/|T_0| = D_j(H) \leq D_j(G).
\end{eqnarray}
By combining \eqref{D_h of T in Stronger Plunnecke} and \eqref{D_j
of T in Stronger Plunnecke} we get:
\begin{eqnarray*}
|T_0|^{h-j}|T_h|^j> (D_j(G)\, |T_0|)^h \geq  |T_j|^h.
\end{eqnarray*}
Thus $H^\prime$ is another counterexample. However, $|T_0|=|T|
\leq |S^c| < |V_0|$, which contradicts the minimality of $|V_0|$.
\end{proof}

\begin{remark}
It is shown in \cite{GPPlIn} that the upper bound is best possible.
\end{remark}
A disadvantage of the traditional form of Pl\"unnecke's inequality
is that it doesn't specify the subset of $V_0$ that exhibits
restricted growth at level $i$. In addition it leaves the
possibility open that different subsets need to be considered for
different $i$. One can get round both difficulties by selecting
any $Z\subseteq V_0$ that satisfies $|\im(Z)|=D_1(G) |Z|$ and
applying Theorem~\ref{Stronger Plunnecke} to the channel of $Z$. It
follows that $|\im^{(i)}(Z)|\leq D_1^i(G) |Z|$ for all $i=1,\dots, h$.

This is in fact the way we will apply the theorem: partition the vertices of $G$
in commutative subgraphs where the condition of the theorem is
satisfied. 
\begin{lemma}\label{Graph partition}
Let $G$ be a commutative graph with vertex set $V_0\cup\dots\cup
V_h $. $V_0$ can be partitioned into $Z_1,\dots,Z_k$ and the vertices of $G$
into vertex disjoint commutative subgraphs $G_1,\dots,G_k$ such
that
\begin{enumerate}
\item $Z_i$ is the bottom layer of $G_i$,
\item $\alpha_i:= D_1(G_i)$ is a strictly increasing sequence,
\item $|\im_{G_i}(Z_i)| = D_1(G_i)\,|Z_i|$.
\end{enumerate}
\end{lemma}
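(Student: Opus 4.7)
The plan is to build the partition iteratively, by induction on $|V_0|$. The base case $|V_0|=0$ is trivial. For the inductive step, I would choose $Z_1\subseteq V_0$ of minimum cardinality satisfying $|\im(Z_1)| = D_1(G)\,|Z_1|$ (such a set exists by definition of $D_1$), and let $G_1$ be the subgraph of $G$ whose $j$th layer is $\im^{(j)}(Z_1)$, with the edges inherited from $G$. A direct check via Pl\"unnecke's upward condition in $G$ (every in-neighbour of a vertex of $\im^{(j)}(Z_1)$ that feeds into a given successor in $\im^{(j+1)}(Z_1)$ has its commutativity-lift remain inside $G_1$) shows $G_1$ is itself commutative. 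Moreover $D_1(G_1)=D_1(G)$: for any $X\subseteq Z_1$ one has $\im_{G_1}(X)=\im_G(X)$, since $\im_G(X)\subseteq \im_G(Z_1)=V_1(G_1)$ and edges are inherited, so $|\im_{G_1}(X)|/|X|\geq D_1(G)$ with equality at $X=Z_1$.

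Next I would form the complementary subgraph $G^*$ whose $j$th layer is $V_j\setminus\im^{(j)}(Z_1)$, and establish that it is also commutative. This is the main obstacle. Given $uv,vw_i\in E(G^*)$ with $u\in V_{j-1}^*$, $v\in V_j^*$, $w_i\in V_{j+1}^*$, commutativity of $G$ produces lifts $v_i\in V_j$ with $uv_i,v_iw_i\in E(G)$; if some $v_i$ lay in $\im^{(j)}(Z_1)$, a length-$j$ path from $Z_1$ to $v_i$ followed by the edge $v_iw_i$ would force $w_i\in \im^{(j+1)}(Z_1)$, contradicting $w_i\in V_{j+1}^*$. The downward condition is handled symmetrically, with the extension by the given successor $w$ of $v$ playing the role of the offending path.

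The minimality of $Z_1$ then forces $D_1(G^*)>D_1(G)$. Indeed, for any non-empty $W\subseteq V_0\setminus Z_1$ satisfying $|\im_{G^*}(W)|/|W|\leq D_1(G)$, the identity $\im_G(W)\setminus\im_G(Z_1)=\im_{G^*}(W)$ yields
\begin{equation*}
|\im_G(Z_1\cup W)|=|\im_G(Z_1)|+|\im_{G^*}(W)|\leq D_1(G)\,(|Z_1|+|W|).
\end{equation*}
Thus $Z_1\cup W$ would also realise $D_1(G)$ while being strictly larger than $Z_1$, contradicting its choice.

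To finish, I would apply the inductive hypothesis to $G^*$, whose bottom layer $V_0\setminus Z_1$ is strictly smaller than $V_0$, obtaining a partition $Z_2\sqcup\cdots\sqcup Z_k$ with commutative subgraphs $G_2,\ldots,G_k$ of $G^*$ fulfilling the three conditions. Prepending $Z_1$ and $G_1$ gives the required partition, and strict monotonicity of $\alpha_i=D_1(G_i)$ across the boundary follows from $\alpha_1=D_1(G)<D_1(G^*)\leq \alpha_2$.
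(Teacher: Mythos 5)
Your overall strategy — select an extremal $Z_1$ realising $D_1(G)$, split $G$ into the part reachable from $Z_1$ and its complement, show the complement has strictly larger $D_1$, and recurse — is the paper's own approach. Where the paper takes $G_1$ to be the channel of $Z_1$ (paths from $Z_1$ to $V_h$), you take the subgraph whose $j$th layer is all of $\im^{(j)}(Z_1)$; this difference is immaterial (both are commutative, both have bottom layer $Z_1$ and $D_1 = D_1(G)$, and both complements partition the vertex set correctly). Your commutativity verifications for $G_1$ and $G^*$ are correct, and the identity $\im_G(Z_1\cup W)=\im_G(Z_1)\sqcup\im_{G^*}(W)$ is exactly the right calculation.

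However, there is a genuine error in the choice of extremality. You take $Z_1$ of \emph{minimum} cardinality among sets realising $D_1(G)$, but the argument you then give for $D_1(G^*)>D_1(G)$ runs: if some non-empty $W\subseteq V_0\setminus Z_1$ had $|\im_{G^*}(W)|\leq D_1(G)|W|$, then $Z_1\cup W$ would also realise $D_1(G)$ while being \emph{strictly larger} than $Z_1$, ``contradicting its choice.'' Producing a strictly larger set with the same ratio does not contradict minimality; it contradicts \emph{maximality}. The extremality the proof needs, and the one the paper uses, is maximal cardinality. With minimal $Z_1$ the claimed strict inequality can genuinely fail. For instance, take $h=1$, $V_0=\{a,b,c\}$, $V_1=\{x,y,z,w\}$, and edges $a\to x$, $b\to y$, $c\to z$, $c\to w$ (Pl\"unnecke's conditions are vacuous with a single edge-layer). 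Here $D_1(G)=1$ and the minimal realising sets are the singletons $\{a\}$ and $\{b\}$; picking $Z_1=\{a\}$ leaves $G^*$ with bottom layer $\{b,c\}$ and $D_1(G^*)=1=D_1(G)$, so the sequence $\alpha_i$ is not strictly increasing. Replacing ``minimum'' by ``maximum'' repairs the argument, after which every other step you wrote goes through unchanged.
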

\begin{proof}
We select the commutative subgraphs $G_i$ as follows. We let
$G_1^\star=G$ and $Z_1 \subseteq V_0$ of maximal cardinality subject to $|\im_{G_1^\star}(Z_1)| = D_1(G_1^\star)\, |Z_1|$. We then
define $G_1$ to be the channel of $Z_1$ in $G_1^\star$ and
$\alpha_1=D_1(G_1)=D_1(G_1^\star)$.

We repeat this process in $G_2^\star$, the channel consisting of all paths in $G_1^\star$ that start in $Z_1^c$ and
end in $\im_{G_1}^{(h)}(Z_1)^c$. The layers of $G_1$ and
$G_2^\star$ do not intersect. We select $Z_2 \subseteq Z_1^c$ of
maximal cardinality subject to $|\im_{G_2^\star}(Z_2)| =
D_1(G_2^\star)\,|Z_2|$. We then take $G_2$ to be the channel of $Z_2$
in $G_2^\star$ (and not in $G$) and $\alpha_2 = D_1(G_2)
=D_1(G_2^\star)$. We carry on until $V_0$ is partitioned into
$Z_1\cup \dots \cup Z_k$. Consequently we get a partition of the vertices of $G$
into vertex disjoint commutative subgraphs $G_1,\dots,G_k$.

The sequence $\{\alpha_i\}$ is strictly increasing as the maximality of the
$Z_i$ implies
\begin{eqnarray*} \alpha_i (|Z_i|+|Z_{i+1}|) & < & |\im_{G_i^\star}(Z_i\cup Z_{i+1})| \\
                                             & = & |\im_{G_i}(Z_i)|+|\im_{G_{i+1}}(Z_{i+1})| \\
                                             & = & \alpha_i
                                             |Z_i|+\alpha_{i+1}|Z_{i+1}|. \qedhere
\end{eqnarray*}
\end{proof}
Ruzsa combined Pl\"unnecke's inequality with some other elementary estimates in a clever way to bound $|A+hB|$. The next section is devoted to explaining Ruzsa's method and motivating the proof of Theorem~\ref{A+hB}.  The proof itself, found in sections \ref{Restricted Addition Graphs} and \ref{Upper Bounds}, is entirely self contained.

\section[Ruzsa's Upper Bound]{Ruzsa's Upper Bound}
\label{Ruzsa Upper Bound}

Let us begin by stating again the results one gets by Ruzsa's method.
\begin{theorem}[Ruzsa] \label{Ruzsa bounds}
Let $A$ and $B$ be finite sets in a commutative group and $h$ a positive integer. Suppose such that
$|A|=m$ and $|A+B| \leq \alpha m$. Then
\begin{eqnarray}\label{Ruzsa's universal upper bound}
|A+hB| \leq \alpha^h m^{2-1/h}.
\end{eqnarray}
For $\alpha\leq 2$
\begin{eqnarray*}\label{Ruzsa's upper bound for alpha close to one}
|A+hB| &\leq& \alpha m + (\alpha-1) m^2 \sum_{j=2}^h
(1+\tfrac{1}{j}) \alpha^{j-1} m^{-1/j} \nonumber\\ &\leq& m +
(1+o(1))\,(1+\tfrac{1}{h}) \alpha^{h-1} (\alpha-1) m^{2-1/h}.
\end{eqnarray*}
\end{theorem}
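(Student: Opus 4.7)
The plan is to apply Pl\"unnecke's inequality to the addition graph $G=G_+(A,B)$ in order to obtain a subset of $A$ with controlled $h$-fold expansion, and then use a covering argument via the restricted addition graph $G_R(A,B,X)$ to transfer the bound to all of $A$.

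Concretely, Pl\"unnecke's inequality (Theorem \ref{Plunnecke}) gives $D_j(G)\le \alpha^j$ for every $j\le h$, so there exists a non-empty $X\subseteq A$ with $|X+jB|\le \alpha^j|X|$ for all such $j$. Set $t=|X|$. Writing $V_h=(A+hB)\setminus(X+(h-1)B)$, which is the top layer of the commutative graph $G_R(A,B,X)$, we have
\[
|A+hB| \;\leq\; |X+(h-1)B| + |V_h| \;\leq\; \alpha^{h-1}t + |V_h|.
\]
To bound $|V_h|$ I would apply Pl\"unnecke's inequality inside $G_R$: its first magnification ratio is at most $(|A+B|-|X+B|)/(m-t)$, and iterating the inclusion $V_j^{G_R}\subseteq V_{j-1}^{G_R}+B$ together with this control produces an estimate of the form $|V_h|\lesssim \alpha^{h-1}m^{h-1}(m-t)$. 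Combining the two pieces and optimising over the range of $t$ produced by Pl\"unnecke gives $|A+hB|\le \alpha^h m^{2-1/h}$.

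For the refined bound valid for $\alpha\le 2$, the key observation is that $|(A+B)\setminus X|$ is governed by the ``excess'' $(\alpha-1)m$ when $\alpha$ is close to one. Iterating the same argument while tracking this surplus layer by layer yields a telescoping sum over $j=2,\ldots,h$ whose terms reproduce exactly $(1+\tfrac{1}{j})\alpha^{j-1}(\alpha-1)m^{2-1/j}$.

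The main obstacle is the sharp control of $|V_h|$: the naive iteration $|V_j^{G_R}|\le |B|\cdot|V_{j-1}^{G_R}|$ loses a factor $|B|^{h-1}$ that is far too large, so one must genuinely use the commutativity of $G_R$ and Pl\"unnecke's inequality inside it. In addition, the size $t$ of the Pl\"unnecke subset is fixed by the graph rather than chosen at will, so the balancing step requires care; this inflexibility is precisely the feature that the rest of the paper improves by invoking Theorem~\ref{Stronger Plunnecke} and the partition of Lemma~\ref{Graph partition}.
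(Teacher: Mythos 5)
The paper does not actually prove Theorem~\ref{Ruzsa bounds}; it is attributed to Ruzsa, and Section~\ref{Ruzsa Upper Bound} offers only an informal sketch. That sketch proceeds by iterating Pl\"unnecke's inequality on successively smaller channels to build a large subset $A_1=Z_1\cup\dots\cup Z_k$ with $|A_1+hB|\leq\sum_i\bigl(\tfrac{\alpha m}{m-|Z_1|-\dots-|Z_{i-1}|}\bigr)^h|Z_i|$ (``Pl\"unnecke for a large subset''), and bounds the leftover $A_2=A\setminus A_1$ by the trivial estimate $|A_2+hB|\leq|A_2|\,|hB|$ together with Corollary~\ref{hB}. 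Your outline does not do this, and as written it has two genuine gaps.

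First, Theorem~\ref{Plunnecke} only says $D_j(G)\leq\alpha^j$ for each $j$; it gives a possibly different extremal $X_j$ for each level and in particular does not produce a single $X$ with $|X+jB|\leq\alpha^j|X|$ simultaneously for all $j\leq h$. That simultaneous control is precisely one of the deficiencies this paper points out and repairs via Theorem~\ref{Stronger Plunnecke}, so you cannot cite plain Pl\"unnecke for it. Second, and more seriously, the central estimate $|V_h|\lesssim\alpha^{h-1}m^{h-1}(m-t)$ for the top layer of $G_R(A,B,X)$ is both unjustified and wrong in order of magnitude: for $h\geq 2$ and $t$ not within $O(m^{2-1/h-h})$ of $m$ it already exceeds the target $\alpha^hm^{2-1/h}$, and no chain of ``iterating $V_j\subseteq V_{j-1}+B$'' will produce it, since that iteration only loses factors of $|B|$. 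You also note yourself that $t=|X|$ is fixed by the extremal configuration and cannot be chosen, which removes the freedom needed for the ``optimise over $t$'' step at the end; Ruzsa circumvents exactly this rigidity by the iterated-channel construction of $A_1$, which lets him prescribe $|A_1|$. Finally, even the choice of restriction set is off: with $C=X$ the first layer of $G_R(A,B,X)$ is $(A+B)\setminus X$, which is essentially all of $A+B$ when $X\subseteq A$ is disjoint from $A+B$, so the magnification ratio of that graph is not controlled by $(|A+B|-|X+B|)/(m-t)$; the restricted graph Ruzsa (and this paper, in the proof of the second half of Theorem~\ref{A+hB}) actually uses for the $\alpha\leq 2$ bound is $G_R(A,B,b+A)$ for $b\in B$, together with induction on $h$.
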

What follows is a heuristic presentation of Ruzsa's argument and the means by which we improve it. Our aim is to help the reader keep the bigger picture in mind in the coming sections and not to provide a detailed presentation.

The best introduction may be to reflect on the limitations of
Pl\"unnecke's inequality. They appear clearly in the proof of
Corollary \ref{hB}. In general there is no reason to assume that
the magnification ratio is $\alpha$ or that $|hB|$ is comparable
to $|X+hB|$ or that $|X|$ is comparable to $|A|$. There are
cases when all assertions hold, for example when $A$ is a subgroup
and $B$ consists of points in distinct cosets of $A$, but there is
much to be gained by a more careful analysis. With these remarks
in mind let us turn to Ruzsa's argument. 

We work in $G:=G_+(A,B),$ be the addition graph of $A$ and $B$. The first step is to partition $A$ into $A_1 \cup
A_2$, which can be thought of as the slow and fast expanding parts
of $A$ under addition with $B$. To bound $|A_2+hB|$ we start with
the trivial estimate $|A_2+hB| \leq |A_2|\,|hB|$ and use Corollary
\ref{hB} to bound $|hB|$. The only known fact about $D_1(G)$ is that it is at most $\alpha$, so it tempting to replace $D_1(G)$ with $\alpha$. Note however that when $D_1(G)=\alpha$ Theorem~\ref{Stronger Plunnecke} can be applied and so $|A+hB|\leq \alpha^h m$, which is
small. We can therefore assume that $D_1(G)=\alpha_1<\alpha$. This is the first novel point of our approach.

The second has to do with bounding $|A_1+hB|$. The standard way to do this is to first apply Pl\"unnecke's inequality to $G$ and get $Z_1\subseteq A$ such that $|Z_1+hB|\leq \alpha^h |Z_1|$. Next apply Pl\"unnecke's inequality to the channel of $A\setminus Z_1$ and get $Z_2\subseteq A\setminus Z_1$ such that $|Z_2+hB| \leq (\tfrac{\alpha m}{m-|Z_1|})^h |Z_2|$. Another application of Pl\"unnecke's inequality to the channel of $A\setminus (Z_1\cup Z_2)$ gives $Z_3\subseteq A\setminus (Z_1\cup Z_2)$ such that $|Z_3+hB|\leq (\tfrac{\alpha m}{m-|Z_1|-|Z_2|})^h |Z_3|$. Iterating gives a subset $A_1 = Z_1\cup\dots\cup Z_k \subset A$ which can be made arbitrarily large (subject to being contained in $A$ of course). The cardinality $|A_1+hB|$ can be bounded by $|Z_1+hB|+\dots+|Z_k+hB|$. Ruzsa calls the resulting statement \emph{Pl\"unnecke's inequality for a large subset}.

The method is imbalanced. While $V_0$ is partitioned, the same is not done for $V_1$. This is largely due to the nature
of Pl\"unnecke's inequality, which gives no lower bounds on the
image in $V_1$ of the set that exhibits restricted growth at level $h$. Using Theorem
\ref{Stronger Plunnecke} instead works better. It introduces the
magnification ratio $\alpha_1$ into the calculations, which is
welcomed as $|hB|$ is bounded in terms of $\alpha_1$, and also
helps us partition $V_1$. As a consequence the numerator of the
fractions found near the end of the preceding paragraph gradually reduces. In fact we will show that a factor of
$(\alpha-\alpha_1)$ appears in the main term. As a consequence
$|A_1+hB|$ becomes rather small when $\alpha_1$ is very close to
$\alpha$. On the other hand the contribution coming from
$|A_2+hB|$ becomes larger as $\alpha_1$ increases. The balancing that takes places
is responsible for reducing Ruzsa's bound by a factor of $h^{-1}$.

To save the additional factor of $h^{-1}$ we have to find a more efficient way to study the growth of $A_2$ than Pl\"unnecke's inequality. To motivate it we examine an example that is typical of sets $A$ and $B$ where $A+hB$ grows fast. Suppose that $A$ is a group and $B$ a collection of points in different cosets of $A$. Then $|A+B|= |A|\,|B|$ and so $\alpha=|B|$. Pl\"unnecke's inequality gives $|A+hB|\leq |A|\,|B|^h$, but an elementary counting argument shows that in fact $|A+hB|\leq |A|\,\tbinom{|B|+h-1}{h}$. With a little care one can extend the
counting argument to a method of bounding $V_h$ that works better for the fast growing part of addition graphs than Pl\"unnecke's inequality. 

Before presenting the details of our approach we note that Ruzsa's trick of bounding $|A_2+hB|$ by $|A_2|\,|hB|$ for the ``fast growing'' $A_2$ will be vital as will be the restricted addition graphs he introduced.

\section[Restricted Addition Graphs]{Restricted Addition Graphs}
\label{Restricted Addition Graphs}

As we saw in Section~\ref{Ruzsa Upper Bound} Pl\"unnecke's inequality appears to not always be optimal to study the growth of addition graphs. As noted a much more elementary counting argument sometimes works better. To make the most of this simple observation one needs to at the very least achieve a similar improvement not only for addition graphs, but for the commutative graphs that result once a component has been removed. These are the restricted addition graphs we defined in Section~\ref{Plunneckes Inequality}. Our first step is to prove that the refinement we are suggesting is not hopeless.
\begin{lemma}\label{Plunnecke for restricted addition graphs}
Let $A$, $B$ and $C$ be finite non-empty sets in a commutative group; $G$ be the restricted addition graph $G_{\!R}(A,B,C)$; and $h$ a positive integer. For all $a\in V_0$
$$|\im^{(h)}(a)|\leq \binom{|\im(a)|+h-1}{h}.$$
\end{lemma}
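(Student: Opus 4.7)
The plan is to count paths in the restricted addition graph by associating each path with a multiset of elements of $B$, and then showing these elements are constrained to lie in a set of size $|\im(a)|$. Fix $a\in V_0=A$ and put $B_a:=\{b\in B:a+b\notin C\}$. Since the translation $b\mapsto a+b$ is a bijection from $B$ to $a+B$, the set $\im(a)$, which consists of those $a+b$ that land in $V_1=(A+B)\setminus C$, is in bijection with $B_a$, so $|B_a|=|\im(a)|$.

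Next I would analyse a generic path of length $h$ starting at $a$: say $a=v_0\to v_1\to\cdots\to v_h$, and set $b_i:=v_i-v_{i-1}\in B$, so that $v_i=a+b_1+\cdots+b_i$. The key claim is that every $b_i$ lies in $B_a$. To see this, suppose for contradiction that $a+b_i\in C$ for some $i\ge 1$. Using commutativity of the group we can rewrite
\begin{equation*}
v_i \;=\; (a+b_i)+(b_1+\cdots+b_{i-1})\;\in\; C+(i-1)B,
\end{equation*}
which contradicts the defining condition $v_i\in V_i=(A+iB)\setminus(C+(i-1)B)$. So each $b_i\in B_a$, as claimed.

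Finally, since the ambient group is commutative, $v_h=a+b_1+\cdots+b_h$ depends only on the multiset $\{b_1,\ldots,b_h\}$ drawn from $B_a$. Therefore
\begin{equation*}
|\im^{(h)}(a)|\;\le\;\#\{\text{multisets of size }h\text{ from }B_a\}\;=\;\binom{|B_a|+h-1}{h}\;=\;\binom{|\im(a)|+h-1}{h},
\end{equation*}
which is the desired bound.

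The main obstacle is the claim that each step $b_i$ of a valid path must lie in $B_a$; everything else is bookkeeping. This claim is where the definition of the restricted graph (removing the layer $C+(i-1)B$) is essential, and where the commutativity of the group is used in an unavoidable way—rearranging the sum so that the ``bad'' summand $b_i$ is paired with $a$ first. Once this is established, the inequality follows from the elementary observation that sums of multisets are at most as numerous as the multisets themselves.
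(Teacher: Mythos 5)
Your proof is correct and takes essentially the same approach as the paper: both deduce that each step $b_i$ must lie in $B\setminus(C-a)$, identify this set's cardinality with $|\im(a)|$, and count multisets. The only cosmetic difference is that you derive the contradiction at the intermediate vertex $v_i\in C+(i-1)B$, whereas the paper derives it directly at the endpoint $a+b_1+\cdots+b_h\in C+(h-1)B$.
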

\begin{proof}
The left hand side is the cardinality of the set $(a+hB)\setminus
(C+(h-1)B)$. Suppose that $a+b_1+...+b_h$ is an element of this
set. If $a +b_i$ belonged to $C$ for any $i$, then this element
would also belong to $C+(h-1)B$. This does not happen and
therefore
\begin{eqnarray*}
(a+hB)\setminus (C+(h-1)B) & \subseteq &  \{a+b_1+\dots+b_h:b_i\in B,\;a+b_i\notin C\}\\
                                                                    & =  &  \{a+b_1+\dots+b_h:b_i\in B\setminus(C-a)\}.
\end{eqnarray*}
The left hand side is therefore at most
$$\binom{|B\setminus(C-a)|+h-1}{h} = \binom{|(a+B)\setminus
C|+h-1}{h}, $$ which is the right hand side.
\end{proof}
We use the lemma to partition the vertices of a restricted addition graph much
like we did with Lemma~\ref{Graph partition} and get an estimate
on the cardinality of its layers.

\begin{proposition}\label{Restricted addition graph growth}
Let $A$, $B$ and $C$ be finite non-empty sets in a commutative group,
$G=G_{\!R}(A,\hspace{-1.6pt}B,\hspace{-1.6pt}C)$ and $h$ a positive integer. Define $\beta,$ the \emph{pseudo-cardinality} of $B,$ to be the positive real number that satisfies
\begin{eqnarray*}
\binom{\beta+h-1}{h}= |hB| .
\end{eqnarray*}
Suppose that the layers of $G$ are $V_0\cup\dots\cup V_h$. Then
\begin{eqnarray*}
|V_h| \leq \frac{|V_1| \, |hB|}{\beta} \leq  \left( 1 + \frac{h}{\beta}\right) \frac{e\,|V_1| \, |hB|^{1-1/h}}{h}. 
\end{eqnarray*}

\end{proposition}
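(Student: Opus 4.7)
The plan is to establish the first inequality $|V_h|\leq |V_1|\,|hB|/\beta$, since the second then follows by an elementary estimate on $|hB|^{1/h}$.

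For that elementary estimate, I would use the standard bound $\binom{n}{h}\leq (en/h)^h$ with $n=\beta+h-1$ to obtain $|hB|^{1/h}\leq e(\beta+h-1)/h\leq e(\beta+h)/h$. Writing $|hB|/\beta=|hB|^{1-1/h}\cdot |hB|^{1/h}/\beta$ and plugging in yields
\begin{equation*}
\frac{|hB|}{\beta}\;\leq\;\Bigl(1+\frac{h}{\beta}\Bigr)\frac{e\,|hB|^{1-1/h}}{h},
\end{equation*}
so multiplying through by $|V_1|$ deduces the second inequality from the first.

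The heart of the proof is the first inequality. First I would establish a per-vertex estimate: for every $a\in V_0$, writing $d_a=|\im(a)|$, Lemma~\ref{Plunnecke for restricted addition graphs} gives $|\im^{(h)}(a)|\leq \binom{d_a+h-1}{h}$, and trivially $|\im^{(h)}(a)|\leq|hB|$. Since $f(x):=\binom{x+h-1}{h}/x=\prod_{i=1}^{h-1}(x+i)/h!$ is strictly increasing in $x>0$, for $d_a\leq\beta$ we get $\binom{d_a+h-1}{h}\leq d_a\,|hB|/\beta$, while for $d_a\geq \beta$ the trivial bound gives $|hB|\leq d_a\,|hB|/\beta$. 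Thus in all cases
\begin{equation*}
|\im^{(h)}(a)|\;\leq\;d_a\cdot\frac{|hB|}{\beta}.
\end{equation*}

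Next I would apply Lemma~\ref{Graph partition} to $G$, obtaining $V_0=Z_1\sqcup\cdots\sqcup Z_k$ and vertex-disjoint commutative subgraphs $G_i$ with $\alpha_i=D_1(G_i)$ increasing and $|\im_{G_i}(Z_i)|=\alpha_i|Z_i|$. Letting $U_j^{(i)}$ denote layer $j$ of $G_i$, vertex-disjointness gives $V_j=\bigsqcup_i U_j^{(i)}$ for every $j$; in particular $|V_1|=\sum_i\alpha_i|Z_i|$ and $|V_h|=\sum_i|U_h^{(i)}|$, so it suffices to prove the per-subgraph bound $|U_h^{(i)}|\leq |U_1^{(i)}|\,|hB|/\beta$ and sum.

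For the per-subgraph bound I combine three estimates on $|U_h^{(i)}|$: (a) Theorem~\ref{Stronger Plunnecke} applied to $G_i$, which since $D_1(G_i)$ is attained by $Z_i$ gives $|U_h^{(i)}|\leq \alpha_i^h|Z_i|=\alpha_i^{h-1}|U_1^{(i)}|$; (b) the trivial inclusion $U_h^{(i)}\subseteq Z_i+hB$, giving $|U_h^{(i)}|\leq |Z_i|\,|hB|=|U_1^{(i)}|\,|hB|/\alpha_i$; and (c) the per-vertex bound of the first step, applied inside the channel of $Z_i$ in $G$, which coincides with $G_R(Z_i,B,C)$. Bound (b) suffices when $\alpha_i\geq \beta$, bound (a) suffices when $\alpha_i^{h-1}\leq |hB|/\beta$, and the intermediate range is where I expect to spend the most effort: here I would use (c) together with the specific structure of the partition (namely that $V_1=\bigsqcup_iU_1^{(i)}$ so the per-vertex bounds summed across $Z_i$ align with $|U_1^{(i)}|$ rather than over-counting to $|\im_G(Z_i)|$).

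The main obstacle is this intermediate range in Step 3, where neither Stronger Pl\"unnecke nor the trivial bound alone suffices and one has to exploit the restricted-addition structure of the channel $G_R(Z_i,B,C)$ together with the maximality clause in Lemma~\ref{Graph partition} to avoid losing a factor in the summation. Once the per-subgraph bound is in hand, summing over $i$ produces $|V_h|\leq (|hB|/\beta)\sum_i|U_1^{(i)}|=|V_1|\,|hB|/\beta$, completing the proof.
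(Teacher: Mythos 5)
Your per-vertex estimate in step 2 and the monotonicity observation for $f(x)=\binom{x+h-1}{h}/x$ are exactly the right ingredients, and your derivation of the second inequality from the first matches the paper. However, there is a genuine gap in how you aggregate the per-vertex bounds, and the detour through Lemma~\ref{Graph partition} does not close it.

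The core difficulty is this: the per-vertex bound $|\im^{(h)}_G(a)|\leq d_a\,|hB|/\beta$ with $d_a=|\im_G(a)|$ cannot be summed directly, since $\sum_{a\in V_0}d_a$ can vastly exceed $|V_1|$ when the neighbourhoods $\im_G(a)$ overlap. You need a partition in which the first-layer images of the pieces exactly tile $V_1$, so that the sum of the $d$-like quantities equals $|V_1|$. Lemma~\ref{Graph partition} produces vertex-disjoint commutative subgraphs $G_i$, and it does give $|V_1|=\sum_i|U_1^{(i)}|$; but the pieces $G_i$ are channels carved out of channels, not restricted addition graphs, so Lemma~\ref{Plunnecke for restricted addition graphs} does not apply to them. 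Your claim that the channel of $Z_i$ in $G$ ``coincides with $G_R(Z_i,B,C)$'' is also not what you need: even if that channel were a restricted addition graph, it is strictly larger than $G_i$ (which lives inside $G_i^\star$, with the earlier images removed), so bounds on the channel do not transfer to $G_i$. Consequently your bounds (a), (b), (c) for the per-subgraph inequality $|U_h^{(i)}|\leq|U_1^{(i)}|\,|hB|/\beta$ do not cover the intermediate range $(|hB|/\beta)^{1/(h-1)}<\alpha_i<\beta$, which is nonempty in general since $\beta^h$ can exceed $|hB|$ by up to a factor of $h!$.

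The paper avoids this entirely by replacing Lemma~\ref{Graph partition} with a much finer, explicitly telescoped decomposition: order $V_0=\{a_1,\dots,a_x\}$ and set $G_1=G_R(a_1,B,C)$ and $G_i=G_R\bigl(a_i,B,\,C\cup(\{a_1,\dots,a_{i-1}\}+B)\bigr)$ for $i>1$. Each $G_i$ is again a restricted addition graph, so Lemma~\ref{Plunnecke for restricted addition graphs} applies to it verbatim, giving $|\im^{(h)}_{G_i}(a_i)|\leq\binom{r_i+h-1}{h}$ with $r_i=|\im_{G_i}(a_i)|$; and by construction the vertex sets of the $G_i$ partition that of $G$, so $\sum_i r_i=|V_1|$ exactly, with no overcounting. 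From there your minimum-and-monotonicity argument finishes the proof. To repair your proposal, drop Lemma~\ref{Graph partition} altogether and apply the per-vertex bound within this nested family of restricted addition graphs rather than within $G$ itself.
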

\begin{remark}
The sets $A$ and $C,$ which have seemingly disappeared from the conclusion, appear implicitly in the quantity $|V_1|= |(A+B)\setminus (B+C)|.$ 
\end{remark}

\begin{proof}[Proof of Proposition~\ref{Restricted addition graph growth}]
Let $x=|V_0|$ and put an arbitrary order to the elements of $A$ so that $A=\{a_1,\dots, a_x\}$. 

Define a sequence of graphs by $G_1= G_{\!R}(a_1,B, C)$ and, for $i>1,$  $G_i= G_{\!R}(a_i,B, C\cup (\{a_1,\dots,a_{i-1}\} +B )).$ So that, say, for $i>0,j>1$ the $j$th layer of $G_i$ is $(a_i + jB) \setminus (\,(\{a_1,\dots a_{i-1}\} + j B) \cup (C+ (j-1) B)\,).$ The vertex sets of the $G_i$ therefore partition the vertex set of $G$  and so 
\begin{eqnarray*}
|V_j| = \sum_{i=1}^x |\im_{G_i}^{(j)}(a_i)| \mbox{ for $j=0,\dots, h$} .
\end{eqnarray*}

To keep the notation simple we define the quantities $r_i = |\im_{G_i}(a_i)|$ for all $i=1,\dots, x.$ In particular we have that $|V_1| = \sum_{i=1}^x r_i .$ 

Next we observe that 
\begin{eqnarray}\label{min}
|\im_{G_i}^{(h)}(a_i)| \leq \min\left\{ \binom{r_i+h-1}{h} , |hB|  \right\}.
\end{eqnarray}
The inequality following from Lemma~\ref{Plunnecke for restricted addition graphs} and the bound 
\begin{eqnarray*}
|\im_{G_i}^{(h)}(a_i)| \leq |a_i +hB| = |hB| .
\end{eqnarray*}

To bound the minimum in \eqref{min} we observe that
\begin{eqnarray*}
\frac{\binom{r+h-1}{h}}{r} = \sum_{i=0}^{h-1} c_i r^i~\mbox{for positive constants $c_i$ that depend on $h$}.
\end{eqnarray*}
So the function $r \mapsto \tbinom{r+h-1}{h} / r$ is increasing. In particular 
\begin{eqnarray*}
\frac{1}{r} \min\left\{ \binom{r+h-1}{h} , |hB| \right\}  \leq \frac{|hB|}{\beta}.
\end{eqnarray*}
Consequently for all $1\leq i \leq x$ we have:
\begin{eqnarray*}
|\im_{G_i}^{(h)}(a_i)| \leq \min\left\{ \binom{r_i+h-1}{h} , |hB|  \right\} \leq \frac{|hB|}{\beta} \,r_i.
\end{eqnarray*}

Summing over $i=1,\dots,x$ gives 
\begin{eqnarray*}
|V_h|  \leq \sum_{i=1}^x \frac{|hB|}{\beta} r_i = \frac{|V_1| \, |hB|}{\beta}.
\end{eqnarray*}

For the second inequality we observe 
\begin{eqnarray*}
|hB| = \binom{\beta+h-1}{h} \leq \left(\frac{e (\beta+h)}{h}\right)^h 
\end{eqnarray*}  
It follows that  $(\beta+h)^{-1} \leq e h^{-1} |hB|^{-1/h}$ and so 
 \begin{eqnarray*} 
|V_h| \leq \left( 1 + \frac{h}{\beta}\right) \frac{|V_1| \, |hB|}{\beta+h} \leq  \left( 1 + \frac{h}{\beta}\right) \frac{e\,|V_1| \, |hB|^{1-1/h}}{h}. \qedhere
\end{eqnarray*} 
\end{proof}

Balister and Bollob\'as obtained a similar upper bound on $|A+hB|$. It follows from Theorem 5.1 in \cite{Balister-Bollobas2007} that $|V_h| \leq |V_1| |hB|^{1-1/h}.$  The upper bound in Proposition~\ref{Restricted addition graph growth} is better by about a factor of $1/h$ when $h = O(\beta)$.

The upper bound is furthermore sharp. Take $A$ and $B$ to be disjoint sets that consist solely of generators of a free commutative group and $C$ to be the empty set. Then $|V_1|=|A+B| = |A|\,|B|, |V_h| = |A+hB| = |A|\,|hB|$ and $\beta=|B|.$ In other words in the proposition we are essentially establishing that $|V_h|$ is maximum when $B$ consists of points that are independent with respect to addition with $A$. 

It is also worth noting that the upper bound is sharp up to a constant even if $\beta$ is much smaller than $|B|.$ For all $\alpha \in \mathbb{Q}^+$ Ruzsa has constructed examples (Theorem 5.5 in \cite{Ruzsa2006}) of integer sets $A$ that satisfy $|2A| = \alpha |A|$ and $|3A| \geq c |2A|^3=c \alpha^3 |A|^{3/2}$, for some absolute constant $c>0.$ In this case $\beta$ is up to a constant $\sqrt{|2A|} = \sqrt{\alpha |A|}$ and so the upper bound is up to a constant attained.

Setting $C=\emptyset$ and applying Corollary \ref{hB} gives 
\begin{eqnarray*}
|A+hB|\leq \left(1+\frac{h}{\beta}\right) \, \frac{e}{h} \,\alpha^h m^{2-1/h}. 
\end{eqnarray*} 
For the purpose of Theorem~\ref{A+hB} we can assume that $\beta$ tends to infinity with $|A|$. This is because  $\tbinom{\beta+h-1}{h} = |hB|$ can be taken to be at least $ |A|^{1/3}$ (otherwise $|A+hB|\leq |A| |hB| \leq |A|^{2-2/3}$) and $h$ is assumed to be a constant.

So Lemma~\ref{Plunnecke for restricted addition graphs} can be used to improve Theorem~\ref{Ruzsa bounds}. Lemma~\ref{Graph partition} also leads to a similar upper bound on $|A+hB|$. We will not show how this is done, but only present a sketch for the benefit of the reader familiar with Ruzsa's paper. In Section~\ref{Further} it discussed how Lemma~\ref{Graph partition} leads to a stronger form of Pl\"unnecke's inequality for a large subset (the term is defined in Section~\ref{Ruzsa Upper Bound}). Using the resulting Theorem~\ref{Growth large subset} in Ruzsa's proof allows one to treat the magnification ratio $\alpha_1$ of $G_+(A,B)$ as a variable that is not automatically assumed to equal $\alpha$. This subtle change results in the additional factor of $1/h$. The best bound however comes by combining the two lemmata.

\section[Upper Bounds]{Upper Bounds}
\label{Upper Bounds}

To prove Theorem~\ref{A+hB} we will apply Theorem~\ref{Stronger Plunnecke} to the slow growing part of the graph (where the magnification ratio plays a role and thus enters the calculations) and Proposition~\ref{Restricted addition graph growth} to the fast growing part.

\begin{proposition}\label{A+hB - all alpha}
Let $h$ be a positive integer, $\alpha$ a positive real number and $m$ an arbitrarily large integer. Suppose that $A$, $B$ are finite non-empty sets in a commutative group that satisfy $|A|=m$, $|A+B| \leq \alpha m$ and $D_1(G_+(A,B))=\alpha_1$. Then
\begin{eqnarray*}
|A+hB| \leq \frac{e}{h}\alpha_1^{h-1}(\alpha - \alpha_1) m^{2-1/h}+ O\left(\alpha^h m^{2-2/h}\right).
\end{eqnarray*}
In particular
\begin{eqnarray*}
|A+hB| \leq \frac{e}{h^2} \left(1-\tfrac{1}{h}\right)^{h-1} \alpha^h \hspace{1pt} m^{2-1/h} + O\left(\alpha^h m^{2-2/(h+1)}\right).
\end{eqnarray*}
\end{proposition}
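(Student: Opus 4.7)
The plan, following the author's stated strategy, is to apply Theorem~\ref{Stronger Plunnecke} to a ``slow-growing'' subgraph of $G_+(A,B)$ and Proposition~\ref{Restricted addition graph growth} to the complementary ``fast'' part. Since $\alpha_1 = D_1(G_+(A,B))$, there exists a non-empty $Z \subseteq A$ with $|Z+B| = \alpha_1 |Z|$, and I would take such a $Z$ of maximum cardinality (as provided by Lemma~\ref{Graph partition}). The channel of $Z$ in $G_+(A,B)$ is a commutative graph whose first magnification ratio equals $|V_1|/|V_0|=\alpha_1$, so Theorem~\ref{Stronger Plunnecke} gives $|Z+hB| \leq \alpha_1^h |Z|$. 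Since $|hB|=|z+hB|\leq |Z+hB|$ for any $z\in Z$, this also forces $|hB| \leq \alpha_1^h |Z| \leq \alpha_1^h m$.

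For the fast part I would pass to the restricted addition graph $G_R(A, B, Z+B)$, which is commutative (being a channel in $G_+(A,B)$), has top layer $(A+hB) \setminus (Z+hB)$, and has first layer $(A+B)\setminus(Z+B)$ of size $|A+B| - |Z+B| \leq \alpha m - \alpha_1 |Z|$. The pseudo-cardinality $\beta$ of $B$ satisfies $\beta = \Theta(\alpha_1 m^{1/h})$, which makes the prefactor $1 + h/\beta$ in Proposition~\ref{Restricted addition graph growth} equal to $1+O(m^{-1/h})=1+o(1)$. Applying Proposition~\ref{Restricted addition graph growth} with $|hB| \leq \alpha_1^h m$ gives $|(A+hB) \setminus (Z+hB)| \leq (1+o(1))\frac{e(\alpha m - \alpha_1|Z|) \alpha_1^{h-1} m^{1-1/h}}{h}$. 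Adding the bound $|Z+hB| \leq \alpha_1^h |Z|$ and rearranging produces the claimed main term $(1+o(1))\frac{e}{h}\alpha_1^{h-1}(\alpha - \alpha_1) m^{2-1/h}$ together with correction terms of the form $\alpha_1^h |Z|$ and $\frac{e\alpha_1^h(m-|Z|) m^{1-1/h}}{h}$.

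The hard part will be showing that these correction terms are absorbed by the stated error $O(\alpha^h m^{2-2/h})$. I expect this to rest on the maximality of $Z$ from Lemma~\ref{Graph partition}: adjoining any $a \in A \setminus Z$ to $Z$ must increase $|Z+B|$ by at least $\alpha_1 + 1$ (otherwise $|Z|$ would not have been maximum), which should translate into a controlled lower bound on $|Z|$ in terms of $m$ and hence into the required control on the corrections. For the ``In particular'' statement, I would maximize the main expression $\frac{e}{h}\alpha_1^{h-1}(\alpha-\alpha_1)m^{2-1/h}$ over $\alpha_1 \in (0, \alpha]$: differentiating in $\alpha_1$ yields the critical point $\alpha_1 = (1-1/h)\alpha$, where the expression equals $\frac{e}{h^2}(1-1/h)^{h-1}\alpha^h m^{2-1/h}$, matching the claim (with the slightly looser error $O(\alpha^h m^{2-2/(h+1)})$ arising from values of $\alpha_1$ near the maximizer).
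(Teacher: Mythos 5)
Your decomposition is too coarse, and the absorption of the correction terms does not go through. After splitting $A = Z \cup (A\setminus Z)$ with $|Z+B|=\alpha_1|Z|$ and applying Proposition~\ref{Restricted addition graph growth} to $G_R(A,B,Z+B)$, your bound is
\[
|A+hB| \leq \alpha_1^h|Z| + s\bigl(\alpha m - \alpha_1|Z|\bigr)
= \alpha_1^h|Z| + s(\alpha-\alpha_1)m + s\alpha_1(m-|Z|),
\]
where $s = |hB|/\beta \approx \tfrac{e}{h}\alpha_1^{h-1}m^{1-1/h}$. The third summand is $\approx \tfrac{e}{h}\alpha_1^h(m-|Z|)m^{1-1/h}$, and for it to be $O(\alpha^h m^{2-2/h})$ you would need $m-|Z|=O(m^{1-1/h})$. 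But the maximality of $Z$ from Lemma~\ref{Graph partition} gives no such lower bound on $|Z|$; it only says that enlarging $Z$ strictly increases the ratio, and $|Z|$ can perfectly well be a single element. When $|Z|=O(1)$ the offending term is $\Theta(\alpha_1^h m^{2-1/h})$, which is the same order as the main term when $\alpha_1 \leq \alpha/2$ and \emph{larger} than the main term when $\alpha_1>\alpha/2$ (exactly the regime that matters for the ``in particular'' optimum $\alpha_1=(1-1/h)\alpha$). So the stated conclusion simply does not follow from a single split.

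What the paper does differently, and what you need: use the \emph{full} partition $A = Z_1\cup\dots\cup Z_k$ from Lemma~\ref{Graph partition}, not just $Z_1$. Apply Theorem~\ref{Stronger Plunnecke} on every piece $G_i$ with small $\alpha_i$ (not just $G_1$) and Proposition~\ref{Restricted addition graph growth} on the remaining pieces (taking $C$ to be the union of the images of the earlier $Z_i$'s), so that $|A+hB|\leq \sum_i \min\{\alpha_i^h, s\alpha_i\}|Z_i|$. The key step you are missing is bounding the convex--then--linear function $\min\{\alpha^h,s\alpha\}$ by a \emph{single linear function} $\alpha_1^h + t(\alpha-\alpha_1)$ that touches $\alpha^h$ at $\alpha=\alpha_1$ and at $\alpha=s^{1/(h-1)}$ (this is Lemma~\ref{linear}). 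Summing that linear majorant over the pieces telescopes, via $\sum_i\alpha_i|Z_i|\leq|A+B|\leq \alpha m$, to $\alpha_1^h m + t(\alpha-\alpha_1)m$, with no stray $\alpha_1^h(m-|Z_1|)$ term. Finally, the paper first disposes of the easy case $s\leq\alpha^{h-1}$ (then $|A+hB|\leq s|A+B|\leq\alpha^h m$) so that $\alpha\leq s^{1/(h-1)}$ can be assumed and the linear majorant argument applies on the whole range of $\alpha_i$; your proposal omits this reduction. Your calculation of the maximizer $\alpha_1=(1-1/h)\alpha$ for the ``in particular'' part is correct once the first inequality is in hand.
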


\begin{proof}
We begin with some preliminary considerations. We set 
\begin{eqnarray*}
s = \frac{|hB|}{\beta}
\end{eqnarray*} 
where the pseudocardinality $\beta$ of $B$ defined in the statement of Proposition~\ref{Restricted addition graph growth}. If $s \leq \alpha^{h-1}$, then we are done as by Proposition~\ref{Restricted addition graph
growth} $|A+hB| \leq s |A+B| \leq \alpha^h m$. So from now on we assume that $\alpha\leq s^{\tfrac{1}{h-1}}$.

Next we apply Lemma~\ref{Graph partition} and get a partition of $A$ into $Z_1\cup\dots\cup Z_k$ and a resulting partition of the vertices of $G$ into the vertices of a sequence of graphs $G_1,\dots , G_k$.  It follows that
\begin{eqnarray}\label{size of A+hB}
|A+hB| =  \sum_{i=1}^k |\im^{(h)}_{G_i}(Z_i)|. 
\end{eqnarray}

To estimate this sum we chose an index $j\in\{1,\dots, k\}$. The value of $j$ will be determined later. Applying Theorem~\ref{Stronger Plunnecke} for $1\leq i \leq j$ gives
\begin{eqnarray*}
|(Z_1\cup\dots\cup Z_j) + hB| = \sum_{i=1}^j |\im^{(h)}_{G_i}(Z_i)| \leq \sum_{i=1}^j \alpha_i^h |Z_i|.
\end{eqnarray*}
To bound the size of $(A+hB) \setminus ((Z_1\cup\dots\cup Z_j) + hB)$ we apply Proposition~\ref{Restricted addition graph growth} with $C = (Z_1\cup\dots\cup Z_j) + B$:
\begin{eqnarray*}
|(A+hB) \setminus ((Z_1\cup\dots\cup Z_j) + hB)| & \leq &  s |(A+B) \setminus ( (Z_1\cup\dots\cup Z_j) + B)| \\
									    &    =   & s \sum_{i=j+1}^k \alpha_i |Z_i| , 
\end{eqnarray*}
It is therefore clear that the optimal cutting point $j$ is the largest index for which $\alpha_j \leq s^{\tfrac{1}{h-1}}$. Note also that from the opening remarks we can assume that $\alpha_1 \leq \alpha \leq  s^{\tfrac{1}{h-1}}$.

Equation \eqref{size of A+hB} now becomes: 
\begin{eqnarray}\label{size of A+hB with min}
|A+hB| \leq \sum_{i=1}^k \min\{\alpha_i^h , s \alpha_i\} |Z_i|.
\end{eqnarray}

The minimum can be estimated by a linear function as follows.
\begin{lemma}\label{linear}
Let $1\leq i \leq k$. In the notation established above 
\begin{eqnarray*}
\min\{\alpha_i^h , s \alpha_i\} \leq \alpha_i^h + t (\alpha_i -\alpha_1),
\end{eqnarray*}
where 
\begin{eqnarray*}
t = \frac{s^{\tfrac{h}{h-1}} - \alpha_1^h}{s^{\tfrac{1}{h-1}} -\alpha_1}\,.
\end{eqnarray*}
\end{lemma}
\begin{proof}[Proof of Lemma~\ref{linear}]
For $1\leq i\leq j$ the minimum is $\alpha_i^h$. The inequality holds because the function $\alpha\mapsto \alpha^h$ is convex and the quantity $t$ has be chosen so that the linear function equals $\alpha^h$ when $\alpha = \alpha_1$ or $s^{\tfrac{1}{h-1}}$. For $j<i \leq k$ the minimum is $s \alpha_i$ and so we are comparing linear functions that meet at $\alpha = s^{\tfrac{1}{h-1}}$. It is therefore enough to observe that $s\leq t$. This concludes the proof of the lemma.
\end{proof}
Substituting the estimate we get from the above lemma in \eqref{size of A+hB with min}  yields:
\begin{eqnarray}\label{size of A+hB with s}
|A+hB| & \leq &  \sum_{i=1}^k (\alpha_1^h + t (\alpha_i - \alpha_1)) |Z_i| \nonumber \\
	    &   =   &  \alpha_1^h m + t (\alpha-\alpha_1) m \nonumber \\
	    & \leq & \alpha_1^h m +  (\alpha-\alpha_1) m (s+ h \, s^{\tfrac{h-2}{h-1}} \alpha_1).
\end{eqnarray}
In the last inequality we used the assumption that $\alpha_1 \leq s^{\tfrac{1}{h-1}}$. 

Our next task is to bound $s$. The second inequality in Proposition~\ref{Restricted addition graph growth} states
\begin{eqnarray*}
s \leq \left(1+ \frac{h}{\beta}\right) \frac{e \, |hB|^{1-1/h}}{h}.
\end{eqnarray*}
Very much like in the penultimate paragraph of Section~\ref{Restricted Addition Graphs} we can assume that $h$ is fixed and $\beta$ tends to infinity with $m$. It follows that $h/\beta = O(|hB|^{-1/h})$ and consequently that
\begin{eqnarray*}
s \leq \frac{e \, |hB|^{1-1/h}}{h} + O(h^{-1} |hB|^{1-2/h}).
\end{eqnarray*}
Corollary~\ref{hB} gives $|hB| \leq \alpha_1^h m $ and so
\begin{eqnarray*}
s \leq \frac{e \, \alpha_1^{h-1} m^{1-1/h}}{h} + O(\alpha_1^{h-2} h^{-1} m^{1-2/h}).
\end{eqnarray*}

Straightforward calculations give the first inequality:
\begin{eqnarray*}
|A+hB|  \leq \frac{e  (\alpha-\alpha_1) \alpha_1^{h-1}}{h}  m^{2-1/h}  + O(\alpha^h m^{1-2/h}) .
\end{eqnarray*}
The expression is maximised when $\alpha-\alpha_1 = \alpha/h$ and thus 
\begin{eqnarray*}
|A+hB| \leq \frac{e}{h^2} \left(1-\tfrac{1}{h}\right)^{h-1} \alpha^h m^{2-1/h} + O\left(\alpha^h m^{2-2/(h+1)}\right). \qedhere
\end{eqnarray*}
\end{proof}

We can now deduce Theorem~\ref{A+hB}.
\begin{proof}[Proof of Theorem~\ref{A+hB}]

For the first part we observe that the function $h\mapsto (1-1/h)^{h-1}$ is decreasing. For large $h$ the upper bound gets arbitrarily close to 
\begin{eqnarray*}
h^{-2} \alpha^h m^{2-1/h}.
\end{eqnarray*}

For the second part of the theorem, when $\alpha$ is close to one, we prove by induction that
\begin{eqnarray}\label{induction}
|A+hB| \leq \alpha m + (\alpha-1) m \sum_{i=2}^h s_i,
\end{eqnarray}
where $s_i = |iB|/\beta_i$ and $\beta_i$ is defined by $\tbinom{\beta_i+i-1}{i} = |iB|$.

The $h=1$ case is clear. For $h>1$ we consider a different restricted addition graph that was studied by Ruzsa in \cite{Ruzsa2006}. 

We take any $b\in B$ and observe that
\begin{eqnarray*}
|A+hB| = |b+A+(h-1)B| + |(A+hB) \setminus(b+A+(h-1)B)| .
\end{eqnarray*}

To bound the first term observe that $|b+A+(h-1)B|  =   |A+(h-1)B| $. By the induction hypothesis
\begin{eqnarray*}
|A+(h-1)B| \leq  \alpha m + (\alpha-1) m \sum_{i=2}^{h-1} s_i.
\end{eqnarray*}

To bound the second term we apply Proposition~\ref{Restricted addition graph growth} to $G_{\!R}(A,B,b+A)$. The cardinality of $V_1$, the second layer of this restricted addition graph, is $(\alpha -1)m$ and so
\begin{eqnarray*} 
 |(A+hB) \setminus(b+A+(h-1)B)| \leq (\alpha-1) m \, s_h.
\end{eqnarray*}
This completes the proof of \eqref{induction}. To finish the proof of Theorem~\ref{A+hB} we note that
\begin{eqnarray*}
s_i \leq (1+o(1)) \,\frac{e}{h} \alpha^{h-1} m^{1-1/h}.
\end{eqnarray*}
and that replacing the first summand in \eqref{induction} by $m$ makes no difference to the asymptotic value.
\end{proof}

\section[Examples]{Examples}
\label{Lower Bounds}

We now present Examples \ref{Universal Lower bound A+hB} and
\ref{Special Lower bound A+hB}. As noted above they are extensions
of those given by Ruzsa in \cite{Ruzsa2006,Ruzsa2009}. To keep the
notation as simple as possible we will assume that all values are integers as the construction works for sufficiently composite values of the parameters
which make the rational values integer if necessary.

We begin with Example \ref{Universal Lower bound A+hB}. Let $a$ and $l$ be integers, which we consider as variables with $a$ assumed to be arbitrarily large. We let $b=l a$ and fix $h$. We will work in $\mathbb{Z}_b^k$, where $k= h + a^{h-1}/h$. We write $x_i$ for the $i$th coordinate of the vector $x$.

We consider $A = A_1 \cup A_2$ where $$A_1= \{x :
x_i\in\{0,l,2l,\dots,(a-1)l\}~\mbox{for}~1\leq i\leq
h~\mbox{and}~x_i=0~\mbox{otherwise}\}$$ and $A_2$ is a collection
of $a^{h-1}/h$ independent points $$ A_2= \bigcup_{j=h+1}^k\{x : x_i =
\delta_{ij}~\mbox{for all $i$}\}.$$ $B$ is taken to be a collection
of $h$ copies of $\mathbb{Z}_b$ $$B = \bigcup_{j=1}^h \{x : 1\leq
x_i \leq b\, \delta_{ij}~\mbox{for all $i$}\}.$$ We estimate the cardinality of the
sets that interest us.
$$|A|= a^h + a^{h-1}/h =(1+o(1)) a^h.$$ As $h$ is fixed different
values of $a$ result to different values of $m$. To get an upper
bound on $|A+B|$ we note that
$$\left|A_1+B\right|\leq\sum_{j=1}^h|A_1+\{x : 1\leq
x_i \leq b\, \delta_{ij}\}|\leq h b a^{h-1}$$ and that $$
|A_2+B|\leq |B|\,|A_2|\leq h b a^{h-1}/h = b a^{h-1}.$$ Thus
\begin{eqnarray*}
|A+B|  &\leq& |A_1+B|+|A_2+B|\\
       &\leq& h b a^{h-1}+ b a^{h-1}\\
       & =  & (h+1) l a^{h}\\
       & =  &  (1+o(1))\,(h+1) l m.
\end{eqnarray*}
$\alpha$ is therefore about $(h+1)l$. $h$ is fixed and so
different values of $l$ result in different $\alpha$.

To bound $|A+hB|$ from below observe that $|hB|=b^h$ and that for
$a,a^\prime\in A_2$ the intersection $(a+hB)\cap(a^\prime+hB)$ is
trivial. Thus
\begin{eqnarray*}
|A+hB| &\geq& |A_2+hB|\\
       & =  & 1+(b^h-1)a^{h-1}/h\\
       & =  & (1+o(1))\, b^h a^{h-1}/h\\
       & =  & (1+o(1))\, l^h a^{2h-1}/h\\
       & =  &  (1+o(1))\,\frac{\alpha^h}{h(h+1)^h} m^{2-1/h}.
\end{eqnarray*}
We are done. We have constructed sets $A$ and $B$ with the desired
property. As $a$ and $l$ assume bigger values so do respectively
$m$ and $\alpha$. In other words the bound of Theorems \ref{A+hB}
and \ref{Ruzsa bounds} is of the correct order of magnitude in
$\alpha$ and $m$.

The difference between Example \ref{Universal Lower bound A+hB}
and Theorem~\ref{A+hB} is huge in terms of $h$. To get a feel of
where the two calculations differ we look back at the proof and
the examine the points where it could be generous. The only such
point where the proof and the example agree is $|A_1+hB|=|hB|$. On
the other hand the greatest disparity appears in the growth of
$|A_1+hB|$. By applying Theorem~\ref{Stronger Plunnecke} we assume
the growth is exponential. This means that $|A_1+hB|$ (and
crucially also $|hB|$) should be in the order of $$\left(\frac{|A_1+hB|}{|A_1|}\right)^h |A_1| = (1+o(1))\left(\frac{hb}{ a}\right)^h |A_1|.$$ In the example however $$|hB| = b^h = (1+o(1)) \left(\frac{b}{ a}\right)^h |A_1|.$$

We now turn to Example \ref{Special Lower bound A+hB}. This time
we fix $1<\alpha\leq 2$ and $h$. We let $a$ be an arbitrarily large integer and set $b=(\alpha-1) a^{h-1}/h$.

We work in a commutative group that has subgroups $B_1,\dots,B_h$ of
cardinality $a$ with pairwise trivial intersection. We take
$$A_1=B_1+\dots+B_h$$ of cardinality $a^h$ and $$A_2=\{a_1,\dots,a_b\}$$ to be a collection of
points lying in distinct non-zero cosets of $A_1$. We set
$$A=A_1\cup A_2$$ and $$B=\bigcup_{i=1}^h B_i.$$

We estimate the cardinality of various sets as before.
$$|A|=a^h+b=(1+o(1)) a^h.$$ Thus different values of $a$
result to different values of $m$. As we are free to chose $a$ we
are free to assign infinitely many values to $m$.

For $A+B$ we observe that $A_1+B=A_1$ and that $|A_2+B|\leq
|A_2|\,|B|\leq b h a$. Thus
\begin{eqnarray*}
|A+B|  &\leq& |A_1+B|+|A_2+B|\\
       &\leq& a^{h}+ b h a\\
       & =  & \alpha a^{h}\\
       & =  &  (1+o(1))\alpha m.
\end{eqnarray*}
For $A+hB$ we observe that $hB=A_1$ and so $A_1+hB=A_1$ and
$A_2+hB$ consists of $|A_2|$ translates of $A_1$. $A+hB$ therefore consists of $|A_2|+1$ translates of $A_1$ whose pairwise intersections
are trivial. We are done as
\begin{eqnarray*}
|A+hB| & =  & 1+((b+1)a^h-1)\\
       & =  &  (1+o(1)) (a^h+ ba^h)\\
       & =  &  (1+o(1)) \left(m+ \frac{(\alpha-1)}{h} m^{2-1/h}\right).
\end{eqnarray*}

\section[Results About Commutative Graphs]{Results About Commutative Graphs}
\label{Further}

In this section we present three further results about general commutative graphs. All three are similar to the results we have obtained thus far and can be proved
by a similar method to the proof of Theorem~\ref{A+hB}: a partitioning of the vertices of the graph (similar to that given by Lemma
\ref{Graph partition} or Lemma~\ref{Plunnecke for restricted addition graphs}) followed by an optimisation process similar
to the proof of Proposition~\ref{Restricted addition graph growth}. 

The first result is strengthening what was earlier referred to as Pl\"unnecke's inequality for a large subset. Often in applications one is not
solely interested in a subset of $V_0$ that exhibits restricted
growth, but in a large subset with this property. A repeated
application of Pl\"unnecke's inequality as described in Section
\ref{Ruzsa Upper Bound} takes care of this (c.f. Corollary 7.1 in
\cite{TaoNotes} and Theorem 3.2 in \cite{Ruzsa2006}). Our method
is a little more efficient.

\begin{theorem}\label{Growth large subset}
Let $G$ a commutative graph with vertex set $V_0\cup\dots\cup V_h$. Suppose that $|V_0|=m$ and $|V_1|=n$. For any $m > t\in \mathbb{R}$ there exists non-empty $X\subseteq V_0$ with $|X|>t$ such that $$ |\im^{(h)}(X)| \leq (|X|-t) \left(\frac{n}{m-t}\right)^h.$$
If we furthermore suppose that $D_1(G)=\alpha_1$, then $$ |\im^{(h)}(X)| \leq \alpha_1^h\,t+ (|X|-t) \left(\frac{n-\alpha_1\,t}{m-t}\right)^h.$$
\end{theorem}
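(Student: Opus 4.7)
The plan is to use the partition from Lemma~\ref{Graph partition} to split $G$ into channels on each of which Theorem~\ref{Stronger Plunnecke} is sharp, then to take $X$ as an initial segment of that partition. Write $V_0 = Z_1 \cup \cdots \cup Z_k$ for the partition with commutative subgraphs $G_1, \ldots, G_k$ and strictly increasing magnification ratios $\alpha_1 < \cdots < \alpha_k$; item 3 of the lemma gives $|\im_{G_i}(Z_i)| = \alpha_i |Z_i|$, and since the $G_i$ partition the vertex set of $G$ we have $\sum_i \alpha_i |Z_i| = n$. Theorem~\ref{Stronger Plunnecke} applied separately inside each $G_i$ produces $|\im^{(h)}_{G_i}(Z_i)| \leq \alpha_i^h |Z_i|$, so that for $X = Z_1 \cup \cdots \cup Z_j$,
\begin{equation*}
|\im^{(h)}(X)| \leq \sum_{i=1}^{j} \alpha_i^h |Z_i|.
\end{equation*}

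For the first bound I would set $c := n/(m-t)$ and take $j^*$ to be the largest index with $\alpha_{j^*} \leq c$; this is at least $1$ because $\alpha_1 \leq n/m \leq c$. With $X = Z_1 \cup \cdots \cup Z_{j^*}$, the pointwise estimate $\alpha_i^h \leq c^{h-1}\alpha_i$ (valid since $\alpha_i \leq c$) gives $|\im^{(h)}(X)| \leq c^{h-1}\sum_{i\leq j^*}\alpha_i|Z_i|$. The tail satisfies $\sum_{i>j^*}\alpha_i|Z_i| > c(m-|X|)$ strictly, because $\alpha_i > c$ for $i > j^*$, so together with $\sum_i\alpha_i|Z_i|=n$ one obtains $\sum_{i\leq j^*}\alpha_i|Z_i| < c(|X|-t)$. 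This simultaneously yields $|\im^{(h)}(X)| < c^h(|X|-t)$ and forces $|X|>t$.

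The second bound follows from the same skeleton with two modifications: replace $c$ by $c_2 := (n-\alpha_1 t)/(m-t)$ (the inequality $\alpha_1 \leq c_2$ rearranges to $\alpha_1 m \leq n$) and replace the crude pointwise bound by the chord of $x\mapsto x^h$ across $[\alpha_1,c_2]$, namely
\begin{equation*}
\alpha_i^h \leq \alpha_1^h + \kappa(\alpha_i - \alpha_1), \qquad \kappa := \frac{c_2^h - \alpha_1^h}{c_2 - \alpha_1},
\end{equation*}
valid by convexity for every $\alpha_i \in [\alpha_1, c_2]$. Summing, using the analogous strict tail bound $\sum_{i \leq j^*}\alpha_i|Z_i| < c_2(|X|-t) + \alpha_1 t$, and invoking the identity $\kappa(c_2-\alpha_1) = c_2^h - \alpha_1^h$ produces a telescoping cancellation that leaves exactly $\alpha_1^h t + c_2^h(|X|-t)$.

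The step needing the most care is making $|X|>t$ strict and handling the boundary case $j^* = k$. In the generic situation $j^* < k$ the strict tail inequality $\alpha_{j^*+1} > c$ (respectively $c_2$) does the work; when $j^* = k$ every magnification ratio lies below the threshold, $X = V_0$, the required $|X| = m > t$ is given by hypothesis, and the target inequality reduces to a direct application of Theorem~\ref{Stronger Plunnecke} to $G$ itself. Choosing the right linearisation of $x^h$ — tangent-like for the first inequality, a chord for the second — is the other small conceptual point.
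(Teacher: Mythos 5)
Your proposal is correct and follows exactly the strategy the paper sketches for Section~\ref{Further}: partition via Lemma~\ref{Graph partition}, bound each piece by Theorem~\ref{Stronger Plunnecke}, and close with a convexity (chord) estimate mirroring Lemma~\ref{linear}. Two small remarks. First, your description of the boundary case $j^*=k$ as ``a direct application of Theorem~\ref{Stronger Plunnecke} to $G$ itself'' is not quite right: that theorem requires $D_1(G)=n/m$, which need not hold even when every $\alpha_i$ is below the threshold. The argument still closes, because the same chord bound together with the identity $\sum_i \alpha_i |Z_i| = n$ (now replacing the strict tail estimate, since $c(m-t)=n$ exactly) gives the claimed inequality with $X=V_0$ and $|X|=m>t$ by hypothesis; it is just not a one-line appeal to Theorem~\ref{Stronger Plunnecke}. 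Second, your step $\alpha_1 \le n/m \le n/(m-t)$ tacitly uses $t\ge 0$, and indeed the statement as printed with $t\in\mathbb{R}$ is false for $t<0$ (take $G_+(\{0\},\{1,2\})$, $h=2$, $t=-1$: the only choice $X=V_0$ violates the bound), so the hypothesis should read $0\le t<m$; this is a slip in the paper's statement, not a gap in your argument.
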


The first inequality is a small improvement over the above
mentioned results. As we have seen the biggest potential gain
comes by introducing the magnification ratio of the graph in the
second inequality. It should be noted that the bound cannot be improved by
much even when we consider addition graphs. As mentioned in Section~\ref{Restricted Addition Graphs}, combining Ruzsa's
argument in \cite{Ruzsa2006} with Theorem~\ref{Growth large
subset} leads to $|A+hB|\ll h^{-1} \alpha^h m^{2-1/h}$.

The second result is the generalisation of Proposition~\ref{Restricted addition graph growth} to general commutative graphs. 

\begin{theorem}\label{Growth commutative}
Let $G$ a commutative graph with vertex set $V_0\cup\dots\cup V_h$. Suppose that $M$ is the maximal cardinality of the images in $V_h$ of one-element sets
$$ 
M = \max_{v\in V_0} |\im^{(h)} (v)| 
$$
and the quantity $\beta$ is given by
$$
M = \binom{\beta +h-1}{h} . 
$$
Then
$$ 
|V_h| \leq \frac{M |V_1|}{\beta} .
$$
\end{theorem}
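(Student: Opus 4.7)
The plan is to adapt the proof of Proposition~\ref{Restricted addition graph growth} by peeling $V_0$ one vertex at a time. Order $V_0 = \{v_1,\dots,v_x\}$ and set $S_i = \{v_1,\dots,v_{i-1}\}$. Let
\[
W_i := \im^{(h)}(v_i) \setminus \im^{(h)}(S_{i-1}),
\]
so that the $W_i$ partition $V_h$, and let $G_i$ be the channel in $G$ from the singleton $\{v_i\}$ to $W_i$; this is a commutative graph with $V_0(G_i)=\{v_i\}$ and $V_h(G_i)=W_i$. Writing $r_i := |V_1(G_i)|$, I aim to prove the two bounds $|W_i|\leq \binom{r_i+h-1}{h}$ and $|W_i|\leq M$, together with $\sum_i r_i \leq |V_1|$. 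Once this is done, the monotonicity of $r\mapsto \binom{r+h-1}{h}/r$ exploited in Proposition~\ref{Restricted addition graph growth} yields $|W_i| \leq (M/\beta)\,r_i$, and summing gives $|V_h|\leq M|V_1|/\beta$.

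The main step is an auxiliary lemma in the spirit of Lemma~\ref{Plunnecke for restricted addition graphs}: any commutative graph $H$ with $|V_0(H)|=1$ and $|V_1(H)|=r$ satisfies $|V_h(H)|\leq \binom{r+h-1}{h}$. I would prove this by induction on $h$. Label $V_1(H)=\{u_1,\dots,u_r\}$ and set $W_j^H := \im_H^{(h-1)}(u_j)\setminus \im_H^{(h-1)}(\{u_1,\dots,u_{j-1}\})$, so the $W_j^H$ partition $V_h(H)$. Consider the channel from $\{u_j\}$ to $W_j^H$ inside $H$, and write $r_j'$ for the size of its first layer. A vertex $y$ in this first layer cannot belong to $\im_H(\{u_1,\dots,u_{j-1}\})$: if it did, then $\im_H^{(h-2)}(y)$ would be a subset of $\im_H^{(h-1)}(\{u_1,\dots,u_{j-1}\})$, which is disjoint from $W_j^H$, so $y$ could reach no element of $W_j^H$. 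Hence every in-neighbour of such $y$ in $V_1(H)$ lies in $\{u_j,\dots,u_r\}$, and Pl\"unnecke's upward condition applied at the source of $H$ with target set the first layer of the channel shows $r_j'\leq r-j+1$. The inductive hypothesis then gives $|W_j^H|\leq \binom{r_j'+h-2}{h-1}\leq \binom{r-j+h-1}{h-1}$, and the hockey-stick identity $\sum_{j=1}^{r}\binom{r-j+h-1}{h-1}=\binom{r+h-1}{h}$ closes the induction.

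With the lemma proved, applying it to $G_i$ gives $|W_i|\leq \binom{r_i+h-1}{h}$, while $|W_i|\leq M$ is immediate from $W_i\subseteq \im^{(h)}(v_i)$. The same ``avoid $\im(S_{i-1})$'' observation used inside the lemma, now applied to $V_0$ instead of $V_1(H)$, shows $V_1(G_i)\subseteq \im_G(v_i)\setminus \im_G(S_{i-1})$; since these right-hand sides are pairwise disjoint subsets of $V_1$, it follows that $\sum_i r_i\leq |V_1|$. Combining everything as in Proposition~\ref{Restricted addition graph growth} completes the proof. The main obstacle is the auxiliary lemma: Theorem~\ref{Stronger Plunnecke} gives only $|V_h(H)|\leq r^h$ in the single-source setting, so extracting the binomial improvement (a factor of roughly $h!$) genuinely requires the interplay between the peeling of $V_1(H)$ and Pl\"unnecke's upward condition described above, rather than just a direct magnification-ratio estimate.
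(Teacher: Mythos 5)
Your proof is correct and follows essentially the same route as the paper: you partition $V_h$ via channels of singletons exactly as in Proposition~\ref{Restricted addition graph growth}, bound each piece by $\min\bigl\{\tbinom{r_i+h-1}{h}, M\bigr\}$ using the single-source estimate $|V_h(H)|\le\tbinom{|V_1(H)|+h-1}{h}$ (which you prove inductively rather than cite, as the paper does), and finish via the monotonicity of $r\mapsto\tbinom{r+h-1}{h}/r$. One small point worth flagging: your step $r_j'\le r-j+1$ implicitly uses that Pl\"unnecke's upward condition produces \emph{distinct} intermediate vertices $v_i$ (otherwise one could always take $v_i=v$ and the condition would be vacuous); this distinctness is part of the standard formulation of the condition, but the wording in Section~\ref{Plunneckes Inequality} leaves it implicit, so it deserves a word.
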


The theorem can be used as an alternative to the trivial estimate $|V_h| \leq M |V_0|.$ The proof is identical to that of Proposition~\ref{Restricted addition graph growth} with only one difference. Lemma~\ref{Plunnecke for restricted addition graphs} no longer applies. The conclusion $|\im^{(h)}(v)| \leq \tbinom{ |\im(v)|+h-1}{h} $ nonetheless holds for all $v\in V_0$ in general commutative graphs. It can be proved by an inductive argument  (e.g. Lemma 4.4 of \cite{GPPlIn}). The rest of the proof is identical to that of Proposition~\ref{Restricted addition graph growth}.  

By combining the two preceding theorems one can bound the cardinality of the layers of commutative graphs in terms of the cardinality of the bottom two layers. This is a generalisation of what we have seen so far as $|A+hB|$ is simply the cardinality of the $h$th layer of $G(A,B)$. 

We can contract $V_0$ to a single vertex and get another commutative graph where the cardinality of the rest of the layers remains unchanged. The generalisation of Lemma~\ref{Plunnecke for restricted addition graphs} to commutative graphs implies that $|V_h|\leq {|V_1|+h-1 \choose h}$ holds for all commutative graphs. This upper bound is in fact best possible under no further assumption on $G$ as all but one elements of $V_0$ may have empty image. To eliminate this sort of examples we assume that $D_h(G)$ is non-zero. Even in this case the bound obtained from the contraction is reasonably accurate. It can nonetheless be improved.

\begin{theorem}\label{Growth general m n h}
Let $G$ a commutative graph with vertex set $V_0\cup\dots\cup V_h$. Suppose that $|V_0|=m , |V_1|=n$ and $D_h(G)>0$. We have $n\geq m^{1-1/h}$ and
$$ |V_h| \leq (1+o(1)) \frac{(n-m^{1-1/h}+3h)^h}{h!}.$$
\end{theorem}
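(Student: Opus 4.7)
The lower bound is immediate from Pl\"unnecke's theorem. Since $D_h(G)>0$ forces $|\im^{(h)}(Z)|\geq 1$ for every non-empty $Z\subseteq V_0$, the choice $Z=V_0$ gives $D_h(G)\geq 1/m$, and Theorem~\ref{Plunnecke} then yields $D_1(G)\geq D_h(G)^{1/h}\geq m^{-1/h}$. Combining this with $D_1(G)\leq n/m$ (again from $Z=V_0$) produces $n\geq m^{1-1/h}$.

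For the upper bound I would set $\alpha_1=D_1(G)\geq m^{-1/h}$. The borderline $\alpha_1=m^{-1/h}$ forces $n=m^{1-1/h}$, and Theorem~\ref{Stronger Plunnecke} applied to $V_0$ (which then minimises the defining ratio of $D_1$) already gives $|V_h|\leq \alpha_1^h m = 1$, dwarfed by the claimed bound. So I would assume $\alpha_1>m^{-1/h}$ and set $t=m^{1-1/h}/\alpha_1\in(0,m)$. Invoking the stronger (with-$\alpha_1$) form of Theorem~\ref{Growth large subset} produces $X\subseteq V_0$ with $|X|>t$ and
\begin{equation*}
|\im^{(h)}(X)| \;\leq\; \alpha_1^{h-1}m^{1-1/h}+(|X|-t)\Bigl(\tfrac{n-m^{1-1/h}}{m-t}\Bigr)^{\!h}.
\end{equation*}
Using $\alpha_1\leq n/m$ and $|X|-t\leq m-t$, both summands turn out to be $o(1)\cdot (n-m^{1-1/h}+3h)^h/h!$ as $m\to\infty$.

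The heart of the argument is bounding the complement $V_h\setminus\im^{(h)}(X)$. Because $\alpha_1=D_1(G)$ forces $|\im(X)|\geq \alpha_1|X|>m^{1-1/h}$, we have $|V_1\setminus\im(X)|<n-m^{1-1/h}$. I would consider the channel $G'$ in $G$ of all paths from $V_0\setminus X$ ending in $V_h\setminus\im^{(h)}(X)$; it is itself commutative. Any such path must have its first-layer vertex in $V_1\setminus\im(X)$: if a path $v\to w\to\cdots\to u$ with $v\notin X$, $u\notin \im^{(h)}(X)$ had $w\in \im(X)$ witnessed by an edge $x\to w$ with $x\in X$, concatenating would yield a length-$h$ path from $x$ to $u$, placing $u\in\im^{(h)}(X)$, a contradiction. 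Hence $|V_1(G')|<n-m^{1-1/h}$. Contracting $V_0(G')$ to a single vertex and applying the commutative-graph generalisation of Lemma~\ref{Plunnecke for restricted addition graphs} (the inequality $|\im^{(h)}(v)|\leq \binom{|\im(v)|+h-1}{h}$ used in the proof of Theorem~\ref{Growth commutative}) gives
\begin{equation*}
|V_h\setminus\im^{(h)}(X)|=|V_h(G')|\;\leq\;\binom{|V_1(G')|+h-1}{h}\;\leq\;\binom{n-m^{1-1/h}+h-1}{h}.
\end{equation*}
Summing the two bounds, estimating $\binom{N}{h}\leq N^h/h!$, and exploiting the $3h-(h-1)$ of slack in the base of the stated bound to absorb the lower-order error terms then yields $(1+o(1))(n-m^{1-1/h}+3h)^h/h!$.

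The main obstacle I anticipate is the asymptotic bookkeeping when $\alpha_1$ is close to $m^{-1/h}$: then $m-t$ is small and the second summand of the bound on $|\im^{(h)}(X)|$ threatens to become comparable to the main term. In that regime $n$ is itself close to $m^{1-1/h}$, so $(n-m^{1-1/h}+3h)^h$ is already small; verifying that the errors remain subdominant (perhaps by decreasing $t$ or substituting the partition from Lemma~\ref{Graph partition}) is the one point that needs genuine care.
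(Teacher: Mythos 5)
The paper does not supply a proof of this theorem: Section~6 merely says the argument parallels the proof of Theorem~\ref{A+hB}, that $n\geq m^{1-1/h}$ follows from $D_h(G)>0$ together with Theorem~\ref{Stronger Plunnecke}, and that $m^{1-1/h}$ is the least possible value of $\alpha_1 m$. Your outline is faithful to that sketch: the derivation of $n\geq m^{1-1/h}$ is correct, and the channel argument showing that every length-$h$ path from $V_0\setminus X$ avoiding $\im^{(h)}(X)$ must pass through $V_1\setminus \im(X)$ --- hence $|V_h\setminus\im^{(h)}(X)|\leq\binom{|V_1\setminus\im(X)|+h-1}{h}$ after contracting the bottom layer --- is exactly the right way to handle the ``fast'' part and is carried out cleanly.

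The gap is in the error bookkeeping for the ``slow'' part, and you correctly flag where it lies but do not close it. The assertion that ``both summands turn out to be $o(1)\cdot(n-m^{1-1/h}+3h)^h/h!$'' is false as stated. Write $N=n-m^{1-1/h}$. For the first summand, when $N=O(1)$ one has $\alpha_1\approx m^{-1/h}$, so $\alpha_1^{h-1}m^{1-1/h}\approx 1$ while the target is $\approx (3h)^h/h!$: the ratio is a fixed positive constant, not $o(1)$ (absorbable, but only by comparing constants carefully against the $3h$ slack). For the second summand the problem is real: using $|X|-t\leq m-t$ gives only $(|X|-t)\bigl(\tfrac{N}{m-t}\bigr)^h\leq N^h/(m-t)^{h-1}$, which exceeds $N^h/h!$ precisely when $m-t<(h!)^{1/(h-1)}$. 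Since $m-t=m(1-m^{-1/h}/\alpha_1)$, this happens whenever $\alpha_1$ is within $O(1/m)$ of $m^{-1/h}$, a regime the hypotheses do not exclude. Your two suggested remedies do not obviously work: decreasing $t$ lowers $\alpha_1 t$ below $m^{1-1/h}$ and therefore enlarges the base of the binomial term; ``substituting the partition'' is just a restatement of the plan. What is actually needed in that borderline regime is to notice that the maximal minimiser $Z^{*}$ of $D_1$ satisfies $|\im^{(h)}(Z^{*})|\geq 1$, hence (via Theorem~\ref{Stronger Plunnecke} on its channel) $|Z^{*}|\geq\alpha_1^{-h}$, which is within $O(1)$ of $m$ once $\alpha_1\leq m^{-1/h}(1+O(1/m))$. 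Taking $X=Z^{*}$ directly then gives $|\im^{(h)}(X)|\leq\alpha_1^h m\leq 1+O(1/m)$ and $|\im(X)|=\alpha_1|X|\geq m^{1-1/h}-O(m^{-1/h})$, so the binomial term alone controls the rest and the remaining contribution is $O(1)$, comfortably inside the $3h$ slack. Splitting into this borderline case and the case $m-t\to\infty$ would repair the proof; as written, the step is a genuine gap.
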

The $o(1)$ term is as usual $o_{m\rightarrow \infty}(1)$. The proof is very similar to the proof of Theorem~\ref{A+hB}. The bound we get is much larger as Ruzsa's trick of bounding the faster growing parts of $V_0$ using Corollary~\ref{hB} no longer applies. It should also be noted that the condition $n\geq m^{1-1/h}$ follows from the assumption on $D_h(G)>0$ and Theorem~\ref{Stronger Plunnecke}. The perhaps mysterious $m^{1-1/h}$ term that appears in the numerator is the minimum value that $\alpha_1 m$ can attain, where as usual $\alpha_1$ is the first magnification ratio of the graph.

The bound is furthermore reasonably sharp. An \textit{independent
addition graph} is $G_+(\{0\},\{\gamma_1,\dots,\gamma_n\})$ where
0 is the identity and $\gamma_1,\dots,\gamma_n$ the generators of
a free commutative group. The \textit{inverse} of a commutative graph
is the commutative graph we obtain by reversing the direction of the paths.
When we consider the union of a suitably chosen independent addition graph and
the inverse of another suitably independent addition graph we see that the
bound in Theorem~\ref{Growth general m n h} cannot be improved
much.

\section[Further Remarks About Sumsets]{Further Remarks About Sumsets}
\label{SetAddition}

We conclude the paper by discussing Theorem~\ref{Stronger Plunnecke} in the context of set addition. Let $A$ and $B$ be finite sets in an abelian group. We wish to apply Theorem~\ref{Stronger Plunnecke} to the addition graph $G_+(A,B)$. Note that in this context $\im^{(i)}(Z)=Z+iB$ for any $Z\subseteq A$. There is no reason why $D_1(G_+(A,B)) = |A+B|/|A|$ and so we pick $\emptyset\neq X \subseteq A$ such that $|X+B|=D_1(G_+(A,B)) |X|$. Applying Theorem~\ref{Stronger Plunnecke} to the addition graph $G_+(X,B)$ (the details can be found below in the proof of Corollary \ref{Restricted Sumset Growth}) gives $$|X+hB| \leq D_1(G_+(A,B))^h |X| = \left(\frac{|X+B|}{|X|}\right)^h |X|.$$ The bound holds for all $h$. The traditional form of Pl\"unnecke's inequality does not guarantee that the same $X$ works for all $h$. The key property of $X$ which allows this is the fact that for all $\emptyset\neq Z \subseteq X$ we have $$\frac{|X+B|}{|X|} \leq \frac{|Z+B|}{|Z|}.$$

This property of the suitably chosen subset $X$ was extended further in \cite{GPNAP}. It was shown there that $X$ has an even stronger property. 
\begin{eqnarray} \label{NAP lemma}
|S+X+B|\leq \frac{|X+B|}{|X|} |S+X| 
\end{eqnarray} 
for any finite set $S$. The inequality can also be extended to not necessarily commutative groups. 

Theorem~\ref{Stronger Plunnecke} has a longer proof than \eqref{NAP lemma} (one has to first establish Pl\"unnecke's inequality), but on the other hand is much more general as it applies to commutative and not just to addition graphs. For example it allows one to work in restricted addition graphs and/or compare $|V_h|/|V_0|$ to $|V_j|/|V_0|$ for any $1\leq j \leq h$. As an illustration we present the following application, which is a variation on Ruzsa's restricted addition graphs.  
\begin{corollary}\label{Restricted Sumset Growth}
Let $h$ a positive integer and $X$, $B$ and $J$ be finite sets in a commutative group with $J\cap X=\emptyset$. Suppose that $$\frac{|(X+jB)\setminus (J+jB)|}{|X|}=\alpha^j $$ for some $1\leq j \leq h$ and that  $$\frac{|(X+jB)\setminus (J+jB)|}{|X|} \leq \frac{|Z+jB)\setminus (J+jB)|}{|Z|} $$ for all $\emptyset\neq Z \subseteq X$. Then $$ |(X+hB)\setminus(J+hB)| \leq \alpha^h |X|.$$  
\end{corollary}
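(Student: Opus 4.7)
The plan is to recognise the statement as a direct application of Theorem~\ref{Stronger Plunnecke} to a suitably chosen restricted addition graph. The choice that matches the two hypotheses is $G = G_{\!R}(X, B, J+B)$: being a restricted addition graph it is automatically commutative, and unfolding the definition gives $V_0 = X$ and, for $i \ge 1$,
\begin{equation*}
V_i = (X+iB) \setminus ((J+B)+(i-1)B) = (X+iB) \setminus (J+iB).
\end{equation*}
So the excluded set at level $i$ is exactly $J+iB$, matching the sets appearing in the hypotheses.

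The key preparatory step I would carry out is to identify, for every $Z \subseteq V_0$,
\begin{equation*}
\im^{(j)}_G(Z) = (Z+jB) \setminus (J+jB).
\end{equation*}
The inclusion $\subseteq$ is automatic, since any image vertex lies in $Z+jB$ and, being in $V_j$, avoids $J+jB$. For the reverse inclusion, given $y = z + b_1 + \cdots + b_j \in (Z+jB) \setminus (J+jB)$, I would check that the obvious path $z,\, z+b_1,\, \ldots,\, y$ lies entirely in $G$: each partial sum $z + b_1 + \cdots + b_i$ belongs to $X+iB$ trivially, and it cannot lie in $J+iB$, for otherwise appending $b_{i+1}+\cdots+b_j$ would force $y \in J+jB$, contradicting the assumption on $y$. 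This nesting argument is the only place where real care is required.

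With the identification in hand, the first hypothesis of the corollary becomes $|V_j|/|V_0| = \alpha^j$, and the second becomes $|\im^{(j)}_G(Z)|/|Z| \ge \alpha^j$ for every non-empty $Z \subseteq V_0$. Together they say that the minimum defining $D_j(G)$ is attained at $Z = V_0$, so $D_j(G) = \alpha^j = |V_j|/|V_0|$, which is precisely the hypothesis required by Theorem~\ref{Stronger Plunnecke}. That theorem then yields $|V_j|^h \ge |V_0|^{h-j}|V_h|^j$, which rearranges to
\begin{equation*}
|V_h| \le \Bigl(\frac{|V_j|}{|V_0|}\Bigr)^{\!h/j}|V_0| = \alpha^h\,|X|,
\end{equation*}
and since $|V_h| = |(X+hB) \setminus (J+hB)|$ this is the stated bound. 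The main obstacle I anticipate is setting up the graph with the correct shift: one must feed $J+B$ rather than $J$ itself as the third argument of $G_{\!R}$ so that the forbidden set at level $i$ comes out as $J+iB$. Once this bookkeeping is in place the proof is essentially a single invocation of Theorem~\ref{Stronger Plunnecke}.
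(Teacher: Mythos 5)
Your proof is correct and follows essentially the same route as the paper: identify the relevant commutative graph, check that the hypotheses force $D_j$ to equal $|V_j|/|V_0|$, and invoke Theorem~\ref{Stronger Plunnecke}. The paper describes the graph as the channel of $G_+(X,B)$ ending in $(X+hB)\setminus(J+hB)$, while you use the explicit restricted-addition-graph notation $G_{\!R}(X,B,J+B)$; the paper itself identifies these two descriptions, and your version has the small advantage of making $V_0=X$ and the identification $\im^{(j)}_G(Z)=(Z+jB)\setminus(J+jB)$ immediate from the definition rather than implicit.
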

\begin{proof}
Let $H$ be the commutative subgraph of $G_+(X,B)$ that consists of all paths that end in $(X+hB)\setminus(J+hB)$. The layers of $H$ are $V_0=X$ and $V_i=(X+iB)\setminus(J+iB)$ for $i>1$. For $Z\subseteq V_0=A$ we have $\im(Z)=(Z+iB)\setminus(J+iB)$. Thus the condition on $X$ is equivalent to $$D_j(H)=\frac{|V_j|}{|V_0|} =\alpha^j.$$ Applying Theorem~\ref{Stronger Plunnecke} gives the desired bound on $|V_h|=|(X+hB)\setminus (J+hB)|$.  
\end{proof}
Christian Reiher has obtained a generalisation to the corollary in the spirit of \eqref{NAP lemma}. He has shown in \cite{Reiher2011} that under the same assumptions on $X$, $B$, $J$ and $\alpha$ the following inequality holds for all finite sets $S$ $$ |(X+jB+S)\setminus (J+jB+S)| \leq \alpha |(X+(j-1)B+S)\setminus(J+(j-1)B+S)|. $$ The proof is relatively short and purely combinatorial. Corollary \ref{Restricted Sumset Growth} can easily be deduced by induction on $h$ by setting $S=B$. 

Reiher's inequality  could therefore have been used to derive Theorem~\ref{A+hB} instead of the material in Section~\ref{Plunneckes Inequality}. We opted to present the graph theoretic approach as Theorem~\ref{Stronger Plunnecke} may be helpful in other contexts.

\bibliography{all}

\begin{thebibliography}{10}

\bibitem{Balister-Bollobas2007}
{\sc Balister, P., and Bollob{\'a}s, B.}
\newblock Projections, entropy and sumsets.
\newblock {\em Combinatorica 32}, 2 (2012), 125--141.

\bibitem{Bukh2008}
{\sc Bukh, B.}
\newblock Sums of dilates.
\newblock {\em Combin. Probab. Comput. 17}, 5 (2008), 627--639.

\bibitem{GMR2008}
{\sc Gyarmati, K., Matolcsi, M., and Ruzsa, I.}
\newblock Pl{\"{u}}nnecke's inequality for different summands.
\newblock In {\em Building Bridges: between mathematics and computer science,
  Bolyai Society Mathematical Studies Vol. 19}, M.~Gr{\"{o}}tschel and
  G.~Katona, Eds. Springer, New York, 2008, pp.~309--320.

\bibitem{GMR2010}
{\sc Gyarmati, K., Matolcsi, M., and Ruzsa, I.}
\newblock A supperadditivity and submultiplicativity property for cardinalities
  of sumsets.
\newblock {\em Combinatorica 30}, 2 (2010), 163--174.

\bibitem{MMT2008}
{\sc Madiman, M., Marcus, A., and Tetali, P.}
\newblock Entropy and set cardinality inequalities for partition-determined
  functions.
\newblock {\em Random Structures Algorithms 40\/} (2012), 399--424.

\bibitem{Nathanson1996}
{\sc Nathanson, M.}
\newblock {\em Additive Number Theory: Inverse Problems and the Geometry of
  Sumsets}.
\newblock Springer, New York, 1996.

\bibitem{GPNAP}
{\sc Petridis, G.}
\newblock New proofs of {P}l{\"{u}}nnecke-type estimates for product sets in
  groups.
\newblock Preprint available online at
  \href{http://arxiv.org/abs/1101.3507}{arXiv:1101.3507v3}.
\newblock To appear in Combinatorica.

\bibitem{GPPlIn}
{\sc Petridis, G.}
\newblock Pl{\"{u}}nnecke's inequality.
\newblock {\em Combin. Probab. Comput. 20}, 6 (2011), 921--938.

\bibitem{Plunnecke1970}
{\sc Pl{\"{u}}nnecke, H.}
\newblock Eine zahlentheoretische anwendung der graphtheorie.
\newblock {\em J. Reine Angew. Math. 243\/} (1970), 171--183.

\bibitem{Reiher2011}
{\sc Reiher, C.}
\newblock Comments on blogpost, available online at
  \href{http://gowers.wordpress.com/2011/02/10/a-new-way-of-proving-sumset-est%
imates/}{http://gowers.wordpress.com/2011/02/10/a-new-way-of-proving-sumset-es%
timates/}.

\bibitem{Ruzsa1989}
{\sc Ruzsa, I.}
\newblock An application of graph theory to additive number theory.
\newblock {\em Scientia, Ser. A 3\/} (1989), 97--109.

\bibitem{Ruzsa1996}
{\sc Ruzsa, I.}
\newblock Sums of finite sets.
\newblock In {\em Number Theory, New York Seminar 1991--1995\/} (New York,
  1996), G.~Chudnovsky and M.~Nathanson, Eds., Springer, pp.~281--293.

\bibitem{Ruzsa1999}
{\sc Ruzsa, I.}
\newblock An analogue of {F}reiman's theorem in groups.
\newblock {\em Ast{\'e}risque 258\/} (1999), 323--326.

\bibitem{Ruzsa2006}
{\sc Ruzsa, I.}
\newblock Cardinality questions about sumsets.
\newblock In {\em Additive Combinatorics, CRM Proceedings $\&$ Lecture Notes\/}
  (New York, 2007), A.~Granville, M.~Nathanson, and J.~Solymosi, Eds., American
  Mathamatical Society, pp.~195--205.

\bibitem{Ruzsa2009}
{\sc Ruzsa, I.}
\newblock Sumsets and structure.
\newblock In {\em Combinatorial Number Theory and Additive Group Theory}.
  Springer, New York, 2009.

\bibitem{TaoNotes}
{\sc Tao, T.}
\newblock Additive combinatorics.
\newblock Lecture notes, available on line at
  \href{http://www.math.ucla.edu/~tao/254a.1.03w/}{www.math.ucla.edu/~tao/254a%
.1.03w/}.

\bibitem{TaoBlogTensor}
{\sc Tao, T.}
\newblock The tensor power trick.
\newblock Blogpost, available online at
  \href{http://terrytao.wordpress.com/2008/08/25/tricks-wiki-article-the-tenso%
r-product-trick/}{http://terrytao.wordpress.com/2008/08/25/tricks-wiki-article%
-the-tensor-product-trick/}.

\bibitem{Tao-Vu2006}
{\sc Tao, T., and Vu, V.}
\newblock {\em Additive Combinatorics}.
\newblock Cambridge University Press, Cambridge, 2006.

\end{thebibliography}

$\hspace{12pt}$\textsc{ Department of Pure Mathematics and Mathematical Statistics, Wilberforce Road, Cambridge CB3 0WB, England}

$\hspace{12pt}$ \textit{Email address}: giorgis@cantab.net

\end{document}